 \newtheorem{thm}{Theorem}[section]
 \theoremstyle{definition}
 \newtheorem{dfnt}[thm]{Definition}
   \newtheorem{construction}[thm]{Construction}
    \newtheorem{notation}[thm]{Notation}
    \newtheorem{ex}[thm]{Example}  
        \newtheorem{cor}[thm]{Corollary}
 \newtheorem{lem}[thm]{Lemma}
 \newtheorem{prop}[thm]{Proposition}
      \newtheorem{teorema}[thm]{Theorem}
      \newtheorem{rem}[thm]{Remark}
\newtheoremstyle{theorem}{\topsep}{\topsep}
{\slshape}{}{\bf}{{\normalfont.}}{.5em}{}
\newsavebox\MBox
\newcommand{\po}{\ar@{}[dr]|{\text{\pigpenfont R}}}
\newcommand{\pb}{\ar@{}[dr]|{\text{\pigpenfont J}}}
\def\pr{\ensuremath{\mathop{\textrm{\normalfont pr}}}}
\def\im{\ensuremath{\mathop{\textrm{\normalfont im}}}}
\def\ker{\ensuremath{\mathop{\textrm{\normalfont ker}}}}
\def\coker{\ensuremath{\mathop{\textrm{\normalfont coker}}}}
\def\id{\ensuremath{\mathop{\textrm{\normalfont id}}}}
\title{Characteristic classes as complete obstructions}
\author[M. Rovelli]{Martina Rovelli}
\address[M. Rovelli]{Department of Mathematics, EPF Lausanne, Switzerland}
\email{martina.rovelli@alumni.epfl.ch}
\begin{document}
\maketitle
\setstcolor{blue}
\begin{abstract}
In the first part of this paper, we propose a uniform interpretation of characteristic classes as obstructions to the reduction of the structure group and to the existence of an equivariant extension of a certain homomorphism defined a priori only on a single fiber of the bundle. By plugging in the correct parameters, we recover several classical theorems. Afterwards, we define a family of invariants of principal bundles that detect the number of group reductions that a principal bundle admits. We prove that they fit into a long exact sequence of abelian groups, together with the cohomology of the base space and the cohomology of the classifying space of the structure group.
\end{abstract}

\tableofcontents

\section*{Introduction}
By a classical result \cite{steenrod, dold}, every principal $G$-bundle $E$ over a CW-complex $X$ is classified up to equivalence by the homotopy class of a map $X\to BG$, where $BG$ is the classifying space of the group.
Having fixed an abelian group of coefficents $Z$, one can look at $
\chi_k\colon H^k(BG;Z)\to H^k(X;Z)
$, the map induced in cohomology in degree $k$.
The elements of $H^k(BG;Z)$ are referred to as \emph{universal characteristic classes}, and depend only on the topological group $G$. On the other hand, their images in the cohomology of the base space $H^k(X;Z)$ are invariants of the bundle $E$, known as \emph{characteristic classes} \cite{ms}.

\vphantom{}

While the different flavors of characteristic classes are often studied separately, in this paper we develop a uniform treatment that can be applied in great generality. We first prove that every characteristic class measures the obstruction to two features of a principal bundle: the possibility of reducing the structure group, and the possibility of extending an interesting quantity to the whole bundle (being a priori only defined on a single fibre). The general approach gives a global explanation of the fact that so many examples of characteristic classes were already known to measure obstruction to group reduction. Afterwards, we define the \emph{plus-cohomology groups} of a principal bundle: invariants that detect the number of interesting group reductions that the bundle admits.
We prove that the plus-cohomology groups fit into a long exact sequence together with the cohomology groups of the base space and of the classifying space of the structure group.

\vphantom{}

For some values of the parameters, it is known that characteristic classes measure the obstruction to certain geometric features of the principal bundle. For example, when $k=1$, $Z=\mathbb Z/2$ and $G=O(n)$, the generator of the group $H^1(BO(n);\mathbb Z/2)\cong\mathbb Z/2$ is the \emph{first Stiefel-Whitney class} $w_1\in H^1(BO(n);\mathbb Z/2)$, which measures obstruction to orientability. The orientability of an $O(n)$-bundle can be expressed by saying that the structure group can be reduced to $SO(n)$ or, equivalently, by saying that the choice of a sign on a distinguished fiber can be extended coherently to the whole bundle. This is encoded precisely in the following theorem \cite{ms}.\\
\ \\
\textbf{Classical Theorem.}
\label{orientability}
For $E$ an $O(n)$-bundle over a CW-complex $X$,
the following are equivalent.
\begin{itemize}
	\item[(A)] There exists an $SO(n)$-bundle $\hat E$ over $X$ such that $\hat E\otimes_{SO(n)}O(n)\cong E$.
	\item[(B)] The first Stiefel-Whitney class $w_1$ of $E$ vanishes, i.e.,
$w_1(E)=0\in H^1(X;\mathbb Z/2)$.
	\item[(C)] The map $\det\colon O(n)\to O(1)\cong\mathbb Z/2$ extends to an $O(n)$-equivariant map $E\to\mathbb Z/2$.
	\end{itemize}

\vphantom{}

The first contribution of this paper is an analog of this theorem for any universal characteristic class $c\in H^k(BG;Z)$, where $G$ is any topological group whose homotopy groups are countable (e.g. a Lie group) and $Z$ is a countable abelian group. To this end, we first construct a topological group $\hat G(c)$ and a homomorphism of topological groups $\gamma\colon G^c:=\tilde\Omega BG\to M^{k-1}Z$, which will play for $c\in H^k(BG;Z)$ the roles played for the first Stiefel-Whitney class $w_1\in H^1(X;\mathbb Z/2)$ in the theorem above by $SO(n)$ and by the determinant map, respectively. These constructions rely on Milnor's model $\tilde\Omega X$ of the based loop space for a countable CW-complex $X$ from \cite{milnor1} and on Milgram's model of classifying space $MA$ of an abelian topological group $A$ from \cite{milgram}.

\vphantom{}

While the precise definitions of $\gamma$ and $\hat G(c)$ require some work, their homotopical behaviour can be described as follows.
If we regard the universal characteristic class $c$ as a map $c\colon BG\to K(k;Z)$, then $\gamma$ is a homomorphism of topological groups homotopy equivalent to $\Omega c\colon G\simeq \Omega BG\to K(k-1;Z)$, and $\hat G(c)$ is a topological group which gives a model for the homotopy fiber of $\Omega c$. In particular, the homotopy groups of $\hat G(c)$ are the same as $G$, except possibly in degrees $k-2$ and $k-1$, where the values depend on $c$ according to an exact sequence of the following form:
$$\xymatrix{0\ar[r]&\pi_{k-1}(\hat G(c))\ar[r]^-{}&\pi_{k-1}(G)\ar[r]&Z\ar[r]&\pi_{k-2}(\hat G(c))\ar[r]^-{}&\pi_{k-2}(G)\ar[r]&0.}$$

Although the reduction group $\hat G(c)$ and the looping homomorphism $\gamma$ are only unique in a homotopical sense, which is explained after the constructions, they succeed at detecting the vanishing of the characteristic class $c$ for $G$-bundles. This idea is summarized by the following result, that is a combination of Theorems \ref{groupreductioncor} and \ref{retractioncor}.
\\
\ \\
\textbf{Main Result 1.}
Let $G$ be a topological group whose homotopy groups are countable, $Z$ a countable abelian group, $k>1$ and $c\in H^k(BG;Z)$ a universal characteristic class. For $E$ a $G$-bundle over a CW-complex $X$,
the following are equivalent.
\begin{itemize}
	\item[(A)] There exists a $\hat G(c)$-bundle $\hat E$ over $X$ such that $\hat E\otimes_{\hat G(c)}G\cong E$.
	\item[(B)] The characteristic class $c$ of $E$ vanishes, i.e., $c(E)=0\in H^k(X;Z)$.
	\item[(C)] The homomorphism of topological groups $\gamma\colon G^{\text{c}}\to M^{k-1}Z$ extends to a $G^{\text{c}}$-equivariant continuous map $E^{\text{c}}\to M^{k-1}Z$.
\end{itemize}
Here, $G^{\text{c}}$ and $E^{\text{c}}$ are \emph{fattened-up} versions of $G$ and $E$, which are introduced in Construction \ref{cofibrantreplacementbundle} and do not change the homotopy type of $G$ and $E$, and $M^{k-1}Z$ is the $(k-1)$-fold Milgram delooping from \cite{milgram}: an Eilenberg-Maclane space of type $K(Z;k-1)$ that is also a group.

\vphantom{}

The equivalence between statements of type (A) and (B) is well-known for groups appearing in the Whitehead tower of the orthogonal group $O(n)$ and certain characteristic classes. 
For instance, the second Stiefel-Whitney class measures the obstruction to admitting a spin structure, the first fractional Pontryagin class measures the obstruction to admitting a string structure, and the second fractional Pontryagin class measures the obstruction to admitting a Fivebrane structure \cite{sss}.
These cases are recovered, and strengthened by the fact that the reduction of the structure group is expressed not only by a lift of the classifying map, but by an honest tensor product with a topological group.
The general construction $\hat G(c)$ might still look different from some of the explicit groups that are known for specific examples. However, there is a homomorphism of topological groups from $\hat G(c)$ to the classical model, which is also a weak equivalence. For instance, in the case of the first Stiefel-Whitney class, there is a map of topological groups $\hat G(w_1)\to SO(n)$ that is also a weak equivalence. As a further application, the construction $\hat G(c)$ can also be used to build a Whitehead tower \cite{whiteheadtower} of $G$, where every layer is a topological group (and the maps homomorphisms of such).

\vphantom{}

On the other hand, for an arbitrary universal characteristic class $c\in H^k(BG;Z)$, the equivalence between conditions analogous to (B) and (C) is not as evident in the literature. Even for well-studied examples of characteristic classes (e.g. Chern classes), there is no obvious choice of homomorphism for which the characteristic class measures the obstruction to a coherent extension.

\vphantom{}

While the first part of the paper explains how characteristic classes are related to the \emph{existence} of group reductions of a fixed bundle, one can then wonder about the \emph{uniqueness} of such group reductions, or more generally look for quantitative analogs of the first result. For instance, when $c:=w_2\in H^2(BSO(n);\mathbb Z/2)$ is the second Stiefel-Whitney class, the problem translates to counting how many spin structures (up to a suitable equivalence relation) an orientable bundle admits. In the second part of the paper we develop some tools in order to answer these kinds of questions.
More precisely, we give a quantitative explanation of the equivalence between (B) and (C) for a universal characteristic class $c\in H^k(BG;Z)$ represented by a homomorphism of topological groups $\gamma$.
Indeed, given a $G$-bundle $E$ over $X$, the vanishing of the characteristic class $c(E)$ can be expressed by saying that $c$ is in the kernel of the characteristic map $
\chi_k\colon H^k(BG;Z)\to H^k(X;Z)
$ of $E$, whereas the extension of the homomorphism $\gamma$ can be expressed by saying that $\gamma$ is in the image of a certain map. Compactly, this phenomenon is governed by an exact sequence of abelian groups. In order to describe the maps of such sequence, we need to describe the setup.

\vphantom{}

We first define a category ${\mathcal Bun_*}$ of pointed bundles that contains topological groups as bundles over a point, and pointed spaces as bundles with a trivial structure group.
In this category lives in particular the \emph{Nomura-Puppe sequence} (cf. \cite{fh}) of any $G$-bundle $E$ over $X$,
$$\xymatrix{\tilde\Omega X\ar[r]&G\ar[r]^-{\iota}&E\ar[r]^-{\pi}&X\ar[r]&BG.}
$$
This sequence is obtained by including the structure group $G$ as a distinguished fiber of $E$, by projecting $E$ onto the base space $X$, and by adding the usual classifying map $X\to BG$ and the \emph{dual} classifying map $\tilde\Omega X\to G$, which is the homomorphism of topological groups constructed in a previous paper \cite{rovelli}.
We explain that any topological abelian group, and in particular the Milgram delooping $M^kZ$, is an abelian group object in the category $\mathcal Bun_*$, and thus the homset ${\mathcal Bun_*}(E,M^kZ)$ becomes an abelian group, when endowed with pointwise multiplication. We define on it a compatible equivalence relation $\simeq_+$, and focus on the \emph{plus-cohomology group} of $E$:
$$H_+^k(E;Z):={\mathcal Bun_*}(E^{\text{c}},M^kZ)/_{\simeq_+}.$$
When the input is a discrete bundle $X$, namely a pointed space, or a codiscrete bundle $G$, namely a topological group, the plus-cohomology groups recover the ordinary cohomology groups (of the classifying space), i.e.,
$$H^k_+(X;Z)\cong H^k(X;Z)\text{ and }H^k_+(G;Z)\cong H^{k+1}(BG;Z).$$
More generally, the plus-cohomology of a trivial bundle $X\times G$ gives the sum of the two, and the plus-cohomology of a contractible bundle $EG$ is trivial, i.e.,
$$H^k_+(X\times G;Z)\cong H^k(X;Z)\oplus H^{k+1}(BG;Z)\text{ and }H^k_+(EG;Z)\cong0.$$
As the equivalence relation $\simeq_+$ is not totally well behaved with respect to precomposition of maps,
the assignment $E\mapsto H^k_+(E;Z)$ will not define a functor on the category ${\mathcal Bun_*}$. However, the assignment acts on the Nomura-Puppe sequence of a bundle $E$, which leads to the following result appearing later as Theorem \ref{longexactsequence}.\\
\ \\
\textbf{Main Result 2.}
Let $G$ be a topological group whose homotopy groups are countable. For every $G$-bundle $E$ over a CW-complex $X$, there is a long exact sequence of abelian groups
$$
\xymatrix{\dots\ar[r]&H^{k-1}_+(E;Z)\ar[r]&H^{k-1}_+(G;Z)\ar[r]^-{\chi_k}&H^k_+(X;Z)\ar[r]&H^k_+(E;Z)\ar[r]&\dots},
$$
of which the connecting map can be identified with the characteristic map $
\chi_k\colon H^k(BG;Z)\to H^k(X;Z)
$ induced by the classifying map of $E$.

\vphantom{}

From the long exact sequence, one can extract a lot of geometric information about the bundle. The exactness in $H^k(BG;Z)$ reflects the fact that characteristic classes are obstructions to the extension of the looping homomorphism to the whole bundle, incorporating part of the previous theorem. The exactness in $H^k_+(E;Z)$ says that the plus-cohomology group is a non-trivial extension of the kernel of the characteristic map in degree $k+1$, and the cokernel of the characteristic map in degree $k$:
$$\xymatrix{0\ar[r]&\coker(\chi_k)\ar[r]&H^k_+(E;Z)\ar[r]&\ker(\chi_{k+1})\ar[r]&0.}$$
They carry, respectively, information about the characteristic classes of the bundles in degree $k$ and $k+1$. This is an indication of some surprising interaction between characteristic classes in different degrees.

\vphantom{}

The existence of such a long exact sequence suggests the existence of (and is used in the author's Ph.D. thesis \cite{rovellitesi} to construct) an unexpected isomorphism
$$H_+^k(E;Z)\cong H^{k+1}(\textrm{cof}(t);Z)$$
between the plus-cohomology groups of $E$ and the ordinary cohomology groups of the homotopy cofiber $\textrm{cof}(t)$ of the classifying map $t$ of $E$. This yields an alternative definition of the plus-cohomology groups, which has a homotopical flavor and does not require any further assumption on the structure group $G$. On the other hand, the original definition is the one that allows us to extract the geometric information of the bundle. For instance, we explain how the plus-cohomology groups of a bundle can be used to enumerate the group reductions with respect to a fixed structure group, e.g., enumerate the spin structures of an orientable bundle or enumerate the string structures of a spin bundle.

\vphantom{}

In Section 1 we set up the framework for the structure group $G$ the theory applies to. We start by stating useful properties of countable CW-groups, which are very special topological groups. We then study topological groups that are equivalent to countable CW-groups in a suitable sense, and prove that they coincide with topological groups whose homotopy groups are countable. In Section 2 we construct from every universal characteristic class a reduction group and a looping homomorphism, and prove that they both detect the vanishing of that universal characteristic class. In Section 3 we define the plus-cohomology groups of principal bundles. We prove that they fit into a long exact sequence, and explain what features of a bundle they detect.

\ 

\textbf{Acknowledgements}.
I would like to thank Kathryn Hess for her support and for her constructive feedback on earlier versions of this paper. I am grateful to Jeffrey Carlson for suggesting an alternative description of the plus-cohomology groups, and to Viktoriya Ozornova for many extremely helpful discussions.

\section{Framework for the structure group}

\subsection{Recollection of prior results}
We refer the reader to \cite{steenrod,dold} for the general theory of principal bundles. In this section, we recall the preliminary results that are needed in the paper. Unless specified otherwise, in this paper all bundles are supposed to be principal bundles.

\begin{notation}
Let $X$ be a topological space and $G$ a topological group.
\begin{itemize}
\item A \textbf{$G$-bundle} over $X$ consists of a space $E$, together with a map $E\to X$ and a right $G$-action $E\times G\to E$, such that $E$ is \textbf{locally trivial}, i.e., there exists an open cover $\{U\}$ of $X$ and $G$-homeomorphisms $E|_U\cong U\times G$ compatible with the projection over $X$. The group $G$ is called \textbf{structure group}, and the space $X$ is called \textbf{base space}. We denote by ${}_X\mathcal Bun_G$ the class of principal $G$-bundles over $X$. Two $G$-bundles $E$ and $E'$ over $X$ are \textbf{equivalent} if there exists an \textbf{equivalence of bundles} $\Phi\colon E\to E'$, i.e., a map $G$-equivariant continuous map that respects the projection over $X$. Equivalence of bundles defines an equivalence relation, because the $G$-action guarantees that any equivalence is bijective with continuous inverse, and we write $E\cong E'$ if $E$ and $E'$ are equivalent.
\item If $a\colon G\to G'$ is a map of topological groups, it endows $G'$ with a $G$-action, and we denote by $a_*E$ the tensor product $E\otimes_GG'$, which is a $G'$-bundle over $X$ and comes with a canonical map $E\cong E\otimes_GG\to E\otimes^a_GG'=a_*E$. Dually, if $f\colon X'\to X$ is a continuous map, we denote by $f^*E$ the pullback $X'\times_XE$ of $E$ along $f$, which is a $G'$-bundle over $X'$ and comes with a canonical map $f^*E=X'\times_XE\to X\times_XE\cong E$. Given $f'\colon X'\to X$ another continuous map and $a'\colon G'\to G''$ another map of topological groups, there are canonical equivalences of $G''$-bundles over $X''$:
$$(f\circ f')^*(E)\cong f'^*f^*(E),\quad a_*f^*(E)\cong f^*a_*(E)\quad\text{and}\quad(a'\circ a)_*(E)\cong a'_*a_*(E).$$
\end{itemize}
\end{notation}

The following property is easy to verify.

\begin{prop}
\label{naturality}
Let $a\colon G\to G'$ be a map of topological groups and $f\colon X\to X'$ a continuous map. Let $E$ be a $G$-bundle over $X$ and $E'$ a $G'$-bundle over $X'$.
Any $a$-equivariant map $\psi\colon E\to E'$ that induces $f\colon X\to X'$ also induces an equivalence $a_*E\cong f^*E'$ of $G'$-bundles over $X$.
\hfill{$\square$}
\end{prop}

Recall that, if $E$ is a $G$-bundle over $B$ and $E$ is contractible, we say that $E$ is a \textbf{universal $G$-bundle} for $G$ and $B$ is a \textbf{classifying space} for $G$.

\begin{teorema}[{\cite[\textsection3]{milnor2}}]
\label{classifyingspacefunctorial}
Let $G$ be a topological group. There exists a universal $G$-bundle $EG$ with corresponding classifying space $BG$. The constructions $EG$ and $BG$ are functorial on topological groups.
\end{teorema}

We recall the universal property of universal $G$-bundles and classifying spaces, which determines the weak homotopy type of the classifying space of $G$.

\begin{teorema}[\cite{dold}]
\label{classificationGbundles}
Let $E$ be a universal $G$-bundle with base space $B$. For every CW-complex $X$, the pullback construction induces a natural bijection
$$(-)^*(E)\colon \mathcal Top_*(X,B)/_{\simeq}\ \cong\ _B\mathcal Bun_G/_{\cong}.$$
\end{teorema}

\begin{rem}
\label{classifyingspaceCWcomplex}
It is easy to see that any topological group $G$ admits a model of classifying space that is also a CW-complex. Indeed, given a weak equivalence $B\to BG$ with $B$ a CW-complex, the pullback $(B\to BG)^*(EG)$ is a universal $G$-bundle with base space the CW-complex $B$. However, it is often convenient to have at one's disposal models of classifying spaces or universal bundles that have further topological or algebraic properties.
\end{rem}

Recall from \cite{milnor1} that a \textbf{countable CW-complex} is a CW-complex with countably many attaching cells. Recall from \cite{milnor2} that a \textbf{countable CW-group} is a countable CW-complex with a structure of a topological group such that the multiplication and the inversion are cellular. Countable CW-groups should not be confused with \emph{$G$-CW-complexes} from equivariant homotopy theory
(for instance treated in \cite{illman,mayequivariant,ckv}).

For countable CW-groups, there are special models of classifying spaces that enjoy further properties.

\begin{teorema}[{\cite[\textsection5]{milnor2}}]
\label{niceclassifyingspace}
Let $G$ be a countable CW-group. There exists a universal $G$-bundle such that the total space $\tilde EG$ and the base space $\tilde BG$ are countable CW-complexes. The constructions $\tilde EG$ and $\tilde BG$ are functorial on countable CW-groups (and maps of topological groups between such).
\end{teorema}

\begin{rem}
\label{comparisonmodels}
Given a countable CW-group $G$, the spaces $EG$ and $\tilde EG$ from \cite{milnor2} are both defined. They share the same underlying $G$-set (given by the fat realization of the bar construction of $G$), but $\tilde EG$ is endowed with what Milnor calls the \emph{weak} topology (that is the usual topology on a fat geometric realization), whereas $EG$ is endowed with what he call the \emph{strong} topology (which is the initial topology with respect to a certain class of maps). Using the explicit descriptions of these topologies from \cite[Section 2]{milnor2}, one can see that the identity $\id\colon\tilde EG\to EG$ defines a continuous map.
The corresponding classifying spaces $BG$ and $\tilde BG$ also carry (a priori) different topologies. The identity map $\id\colon\tilde BG\to BG$ defines however a weak equivalence (because the identity $\id\colon\tilde EG\to EG$ is a weak equivalence and is compatible with the $G$-action). Both models are needed in this paper (e.g. in the proof of Proposition \ref{almostnicegroups}). On the one hand, $BG$ is always defined and used when $G$ is an arbitrary group. On the other hand, when restricting to the case of $G$ being a countable CW-group, $\tilde BG$ is defined and gives a model of classifying space that admits the structure of a countable CW-complex.
\end{rem}

The emphasis on countable CW-complexes is also due to the following theorem of Milnor.

\begin{teorema}[Milnor]
\label{classifyinggroup}
Let $X$ be a connected countable CW-complex. There exists a countable CW-group $\tilde\Omega X$ and a universal $\tilde\Omega X$-bundle $\tilde PX$ over $X$, with the property that $(-)_*(\tilde PX)$ induces for every topological group $G$ a surjection
$$(-)_*(\tilde PX)\colon \mathcal Gp(\mathcal Top)(\tilde\Omega X,G)\twoheadrightarrow{}_X\mathcal Bun_G/_{\cong}.$$
\end{teorema}

\begin{proof}
Suppose at first that $X$ is a countable connected simplicial complex. Then the constructions of the topological group $\tilde\Omega X$ and of the $\tilde\Omega X$-bundle $\tilde PX$ over $X$ are from \cite[Theorem 3.1]{milnor1}. Still in \cite[Theorem 3.1]{milnor1}, Milnor endows $\tilde\Omega X$ with the structure of a countable CW-complex, which makes $\tilde\Omega X$ into a countable CW-group, as mentioned in the first paragraph of \cite[Section 5]{milnor2}. Finally, the surjection of $(-)_*(\tilde PX)$ is from \cite[Theorem 5.1]{milnor1}.

When, more generally, $X$ is a connected countable CW-complex, Milnor explains in \cite[Corollary 3.7]{milnor1} how to obtain the topological group $\tilde\Omega X$ and the $\tilde\Omega X$-bundle $\tilde PX$. Namely, given a homotopy equivalence $s\colon X\to S$, where $S$ is a countable connected simplicial complex, one can take $\tilde\Omega X:=\tilde\Omega S$ and $\tilde PX:=s^*\tilde PS$. Since $s$ is a homotopy equivalence, the surjection of $(-)_*(\tilde PS)$ implies the surjection of $(-)_*(\tilde PX)$.
\end{proof}

Milnor's model of classifying space $\tilde B$ will be used to identify maps of topological groups, in the sense of the following definition.
\begin{dfnt}
\label{basedalgebraicequivalence}
Let $G$ and $G'$ be countable CW-groups. We say that two maps $a,b\colon G\to G'$ of topological group are \textbf{algebraically equivalent} if $\tilde Ba\simeq\tilde Bb\colon\tilde BG\to\tilde BG'.$ In this case we write $a\equiv b\colon G\to G'$.
\end{dfnt}

Algebraic equivalence is indeed an equivalence relation, and detects the kernel of the surjection above, as made precise by the following theorem from a previous paper.

\begin{teorema}[\cite{rovelli}]
\label{classificationnice}
Let $X$ be a connected countable CW complex and $G$ a countable CW-group. Then the assignments
$(-)_*(\tilde PX)$ and $(-)^*(\tilde EG)$
induce bijections
$$\mathcal Gp(\mathcal Top)(\tilde\Omega X,G)/_{\equiv}\ \cong\ _X\mathcal Bun_G/_{\cong}\ \cong\ \mathcal Top(X,\tilde BG)/_{\simeq}.$$
\end{teorema}

\begin{rem}
\label{adjunctionweakequivalence}
If a map of topological groups $a\colon\tilde\Omega X\to G$ and a continuous map $g\colon \tilde BG\to X$ correspond to each other as in Theorem \ref{classificationnice}, i.e., there is an equivalence $a_*\tilde PX\cong g^*\tilde EG$ of $G$-bundles over $X$, then $a$ is a weak equivalence if and only if $g$ is a weak equivalence.
Indeed, the map $\Phi$ induced by $a$ and $g$
$$\Phi\colon \tilde PX\cong\tilde PX\otimes_{\tilde\Omega X}\tilde\Omega X\to\tilde PX\otimes_{\tilde\Omega X}G=a_*\tilde PX\cong g^*\tilde EG=X\times_{\tilde BG}\tilde EG\to\tilde BG\times_{\tilde BG}\tilde EG\cong\tilde EG$$
is a weak equivalence (because its source and target are contractible), induces $g\colon X\to\tilde BG$ at the level of base spaces and restricts to $a\colon \tilde\Omega X\to G$ on every fiber. It follows that $a$ is a weak equivalence if and only if $g$ is one.
\end{rem}

We state one last property of countable CW-complexes that will be needed later. The proof is elementary but technical, and is therefore postponed until the appendix.

\begin{prop}
\label{totalspaceCWcomplex}
Let $E$ be a fiber bundle over $X$ with fiber $F$. If $X$ and $F$ admit the structure of countable CW-complexes, so does $E$.
\end{prop}

Many of the constructions that have been recalled involve countable CW-groups. It is in general hard to understand whether a given topological group endowed with a CW-structure is indeed a countable CW-group. We collect here the main known examples.

\begin{ex}
\begin{itemize}
\item All discrete countable groups are countable CW-groups.
\item For every connected countable CW-complex $X$, the group $\tilde\Omega X$ is a countable CW-group.
\item Finite products of countable CW-groups are countable CW-groups.
\item It is possible but unclear whether the usual matrix groups (e.g., the orthogonal group $O(n)$, the unitary group $U(n)$, and the simplectic group $Sp(n)$) are (countable) CW-groups. A finite CW-structure for $O(n)$, $U(n)$, and $Sp(n)$ (seen as degenerate Stiefel manifolds) is given, for instance, by the case $k=0$ of \cite[Theorem 2.1]{steenrodbook}. By reading through the definition of the cells in \cite[Definition 1.5]{steenrodbook} one could investigate by direct inspection whether the multiplication and the inversion maps are cellular. While it is easy to treat the case $n=1$, addressing the question for general $n$ becomes very quickly computationally involved.
\end{itemize}
\end{ex}

The reader should not be discouraged by the fact that the condition of being a countable CW-group is highly restrictive. Indeed, the next section is devoted to relaxing this notion in order to include the majority of relevant examples (e.g. all Lie groups).

\subsection{Groups that admit a countable CW-replacement}
Thanks to the following proposition, we see that many groups are at least equivalent to a countable CW-group in a precise sense. The theory developed in the later sections of this paper applies to such larger class of topological groups. Recall from Remark \ref{comparisonmodels} that for every countable CW-group $G$ the identity maps $\id\colon\tilde EG\to EG$ and $\id\colon\tilde BG\to BG$ are weak equivalences.

\begin{prop}
\label{almostnicegroups}
For a topological group $G$, the following are equivalent.
\begin{enumerate}
\item There exists a countable CW-group $\tilde G$ and a map of topological groups $\epsilon\colon \tilde G\to G$ that is also a weak equivalence.
\item There exists a classifying space $\tilde B$ for $G$ that is a countable CW-complex.
\item All homotopy groups of $G$ are countable.
\end{enumerate}
\end{prop}

\begin{proof}
We prove $(1)\Rightarrow(2)$. Suppose we are given a countable CW-group $\tilde G$ and a map of topological groups $\epsilon\colon \tilde G\to G$ that is a weak equivalence. By applying the functorial model of classifying space from Theorem \ref{classifyingspacefunctorial} we obtain a weak equivalence $B\tilde G\to BG$. Recall from Remark \ref{comparisonmodels} that the identity is a weak equivalence $\id\colon\tilde B\tilde G\to B\tilde G$. Therefore, $B\epsilon\colon\tilde B\tilde G\to B\tilde G\to BG$ is a weak equivalence. By pulling back $EG\to BG$ along this map we obtain a new universal $G$-bundle of which the base space is the countable CW-complex $\tilde B\tilde G.$

We prove $(2)\Rightarrow(1)$. Let $\tilde E$ be a universal $G$-bundle with a countable CW-complex
$\tilde B$ as base space. By Theorem \ref{classifyinggroup}, there exists a map of topological groups $\epsilon\colon \tilde\Omega\tilde B\to G$ such that $\epsilon_*(\tilde P\tilde B)\cong\tilde E$. The map induced by $\epsilon$
$$\tilde P\tilde B\cong\tilde P\tilde B\otimes_{\tilde\Omega\tilde B}\tilde\Omega\tilde B\to\tilde P\tilde B\otimes^\epsilon_{\tilde\Omega \tilde B}G=\epsilon_*(\tilde P\tilde B)\cong\tilde E$$
is a weak equivalence, as the source and the target are weakly contractible spaces, and is
compatible with the projection over $\tilde B$. It follows that the restriction to the fiber, which can be computed to be
$$\epsilon\colon \tilde\Omega\tilde B\cong \tilde\Omega\tilde B\otimes_{ \tilde\Omega\tilde B} \tilde\Omega\tilde B\to \tilde\Omega\tilde B\otimes^\epsilon_{\tilde\Omega\tilde B}G\cong G,$$
is therefore a weak equivalence, too.

We prove $(2)\Leftrightarrow(3)$. Let $B$ be a classifying space for $G$ that admits the structure of a CW-complex, which exists by Remark \ref{classifyingspaceCWcomplex}. Observe that the homotopy groups of $G$ are countable if and only if the homotopy groups of $B$ are countable. By \cite[Theorem 6.1, Page 137]{lw}, the homotopy groups of $B$ are countable if and only if $B$ is homotopy equivalent to a countable CW-complex $\tilde B$, which is necessarily a classifying space for $G$, too.
 \end{proof}
 
 The proposition motivates the following terminology.
 
\begin{dfnt}
If $G$ is a topological group satisfying the equivalent conditions of Proposition \ref{almostnicegroups}, we say that $G$ \textbf{admits a countable CW-replacement} or that it \textbf{admits a good classifying space}. Referring to the notation from Proposition \ref{almostnicegroups}, the group $\tilde G$ is a \textbf{countable CW-replacement} for $G$, the space $\tilde B$ is a \textbf{good classifying space} for $G$ and the group $\tilde\Omega X$ is a \textbf{good countable CW-replacement} for $G$. We denote
$$G^c:=\tilde\Omega\tilde B.$$
 \end{dfnt}
 
 The emphasis on $\tilde\Omega\tilde B$, as opposed to the more general $\tilde G$, is due to the fact that we have a good control on maps of topological groups defined on $\tilde\Omega\tilde B$, thanks to Theorem \ref{classifyinggroup}. Note also that all choices of $\tilde B$ lead to equivalent groups $G^c$.
 
 \vphantom{}
 
The notion of a topological group that admits a countable CW-replacement widely generalizes that of countable CW-group.

\begin{ex}
\begin{itemize}
\item Using \cite[Theorem 6.1, Page 137]{lw}, we obtain that any topological group whose underlying topological space is homotopy equivalent to a countable CW-complex has countable homotopy groups, and therefore admits a countable CW-replacement.
\item Using \cite[Corollary 1]{milnor3}, we obtain that any Lie group admits a countable CW-replacement, since the underlying manifold is separable.
\item For many interesting (orientable) manifolds $M$, the (orientation-preserving) diffeomorphism group $\textrm{Diff}(M)$ has countable homotopy groups and therefore admits a countable\linebreak CW-replacement. For instance, this is true when when $M=S^2$ (since $\textrm{Diff}(S^2)\simeq O(3)$ by the a theorem of Smale \cite{smale},
when $M$ is an orientable surface of genus $g\ge2$ such that $\pi_0(\textrm{Diff}(M))$ is countable (since by \cite[Theorem 1(c)]{ee} every connected component of  $\textrm{Diff}(M)$ is contractible),
when $M=S^3$ (since $\textrm{Diff}(S^3)\simeq O(4)$ by the Smale Conjecture proven by Hatcher in \cite{hatchersmaleconjecture}), when $M=S^1\times S^2$ (since $\textrm{Diff}(S^1\times S^2)\simeq O(2)\times O(3)\times\Omega O(3)$ by \cite{hatchersmaleconjecture2}), and when $M$ is a hyperbolic manifold of dimension 3 (as a consequence of \cite[Theorem 1.2]{gabai}).
\item Finite products of topological groups that admit a countable CW-replacement admit a countable CW-replacement.
\end{itemize}
\end{ex}

If a topological group $G$ admits a countable CW-replacement, the good replacement $G^c$ is enough to recover the theory of $G$-bundles.
Recall from Remark \ref{comparisonmodels} that for every countable CW-group $G$ the identity maps $\id\colon\tilde EG\to EG$ and $\id\colon\tilde BG\to BG$ are weak equivalences.

\begin{prop}
\label{cofibrantreplacementbundle}
Let $X$ be a countable CW-complexes and $G$ a topological group that admits a countable CW-replacement. A good CW-replacement $\epsilon\colon G^c\to G$ for $G$ induces a bijection
$${\epsilon}_*\colon{}_X{\mathcal Bun}_{G^{\text{c}}}/_{\cong}\to{}_X{\mathcal Bun}_{G}/_{\cong}.$$
\end{prop}

\begin{proof}
Using the fact that, by Proposition \ref{naturality} applied to the map $E\epsilon\colon EG^c\to EG$, there is an equivalence $(B\epsilon)^*(EG)\cong\epsilon_*(EG^c)$ of $G^c$-bundles over $BG$, one can check that the map $\epsilon_*$ fits into the following commutative diagram:
$$\xymatrix@R=.5cm@C=1cm{\mathcal Top(X,BG^c)/_\simeq\ar[r]^-{B\epsilon\circ-}\ar[d]_-{(-)^*(EG^c)}&\mathcal Top(X,BG)/_\simeq\ar[d]^-{(-)^*(EG)}\\
{}_X{\mathcal Bun}_{G^{\text{c}}}/_{\cong}\ar[r]_-{\epsilon_*}&{}_X{\mathcal Bun}_{G}/_{\cong}.}$$
We conclude that $\epsilon_*$ induces a bijection because the vertical maps are bijections by Theorem \ref{classificationGbundles}, and the top horizontal map is a weak equivalence because $B\epsilon$ is a weak equivalence and $X$ is a CW-complex.
\end{proof}

\begin{notation}
\label{notationc}
Given $G$ a topological group with a good CW-replacement $\epsilon\colon G^c\to G$, denote by $(-)^c$ the inverse of the map $\epsilon_*$ from Proposition \ref{cofibrantreplacementbundle}:
$$(-)^c\colon{}_X{\mathcal Bun}_{G}/_{\cong}\to{}_X{\mathcal Bun}_{G^{\text{c}}}/_{\cong}.$$
\end{notation}

\section{Characteristic classes as obstructions}
Given a topological group $G$ that admits a countable CW-replacement and a countable abelian group $Z$, the aim of this section is to construct a topological group $\hat G(c)$ and the homomorphism of topological groups $\gamma\colon G^c\to M^{k-1}Z$ associated to a universal characteristic class $c\in H^k(BG;Z)$, and prove that they both detect the vanishing of the characteristic class $c$ for a $G$-bundle.

\subsection{The looping homomorphism}
Milgram constructed in \cite{milgram} a model of classifying space for \emph{abelian} topological groups, that inherits the structure of a topological abelian group. We will need the following properties of Milgram's classifying space. Recall Milnor's model $\tilde B$ from Theorem \ref{niceclassifyingspace}.

\begin{teorema}
\label{classifyingspacemilgram}
Let $A$ be an abelian countable CW-group.
\begin{enumerate}
\item For any abelian countable CW-group $A$, there exists a functorial classifying space $MA$ that is an abelian countable CW-group.
\item There is a natural homotopy equivalence $\tau_A\colon \tilde BA\to MA$.
\end{enumerate}
\end{teorema}
\begin{proof} All the ingredients to prove (1) are from \cite{milgram}. The construction of the topological abelian group $MA$ follows from \cite[Corollary 1.7]{milgram}. In \cite[Theorem 2.3]{milgram}, a cell decomposition for $MA$ is constructed. When $A$ is a countable CW-complex, the cell decomposition constructed in \cite[Theorem 2.3]{milgram} recovers the topology, so $MA$ is a CW-complex. By reading through the definition of the cells, we see that the cells can be enumerated by finite collections of cells of $A$, and therefore there are countably many. Therefore $MA$ is a countable CW-complex. Moreover, it is proven in \cite[Section 2]{milgram} that the multiplication on $MA$ defined in \cite[Corollary 1.7]{milgram} is continuous and cellular. The fact that the inversion is cellular is straightforward, based on the explicit formula for inverses, in the proof of \cite[Corollary 1.7]{milgram}.
So $MA$ is an abelian countable CW-group.

For (2), one can read from \cite{milgram} and \cite{milnor2} the explicit constructions of $MA$ and $\tilde BA$, and see that they coincide respectively with the neat realization $|\textrm{Bar}(A)|$ and the fat realization $||\textrm{Bar}(A)||$ of the bar construction $\textrm{Bar}(A)$. We take $\tau_A$ to be the Bousfield-Kan map
	$$\tau_A\colon\tilde BA=||\textrm{Bar}(A)||\to|\textrm{Bar}(A)| = MA,$$
	which is natural.
	As the unit of $A$ is a vertex and $A^n$ is a CW-complex (because $A$ is a countable CW-complex), the simplicial space $\textrm{Bar}(A)$ is \emph{good} (according to \cite[Definition A.4]{segal}), and therefore $\tau_A$ is a weak equivalence by \cite[Proposition A.1(iv)]{segal}.
 \end{proof}

We are interested in iterating this construction.
\begin{notation}
Let $Z$ be a discrete countable abelian group and $k\ge0$. We denote by $M^kZ$ the topological abelian group given by
$$M^kZ:=\underbrace{M\circ\dots\circ M}_{k\text{ times}}(Z).$$
\end{notation}

It can be seen by induction on $n\ge0$ that $M^kZ$ has the homotopy type of an Eilenberg-Maclane space of type $(Z;k)$. Since $M^kZ$ is also an abelian topological group, by \cite[Theorem 1.2.4]{piccinini}, for any topological space $X$, the set of homotopy classes $[X,M^kZ]$ becomes an abelian group when endowed with pointwise multiplication.

\begin{prop}
\label{thirdiso}
Let $Z$ be a countable abelian group and $k\ge0$. For any CW-complex $X$ there is a natural isomorphism of abelian groups
$$H^k(X;Z)\cong [X,M^kZ].$$
\end{prop}

\begin{proof}
Recall from \cite{hatcher} that the ordinary cohomology $H^k(X;Z)$ can be identified with $[X,K(Z;k)]$ as a set, and with $[X,\Omega K(Z;k+1)]$ as a group, where the notation should emphasize that the group structure on $[X,\Omega K(Z;k+1)]$ is induced by the H-space structure of the loop space $\Omega K(Z;k+1)$.
Using \cite[Section 5]{milgram} there is a weak equivalence
$$M^{k}Z\stackrel{\simeq}{\longrightarrow}\Omega M^{k+1}Z$$
that is also maps of $H$-spaces, where $M^kZ$ is endowed with the abelian group structure and the H-space structure on $\Omega M^{k+1}Z$ is given by concatenation. This map is in fact a homotopy equivalence, because $M^kZ$ and $\Omega M^{k+1}Z$ are both CW-complexes using \cite[Corollary 5.3.7]{pf}.
Therefore, this map induces the desired isomorphism of abelian groups
$$[X,M^{k}Z]\cong[X,\Omega M^{k+1}Z]\cong[X,\Omega K(Z;k+1)]\cong H^k(X;Z).\qedhere$$
\end{proof}

Recall that for any topological group $G$ with classifying space $BG$ the elements of $H^k(BG,Z)$ are called \textbf{universal characteristic classes}. We refer the reader to \cite{ms} for more details on this account.

\begin{construction}
\label{cocycle}
Let $G$ be a topological group that admits a countable CW-replacement, $\tilde B$ a classifying space for $G$ that has the homotopy type of a countable CW-complex, $Z$ a discrete countable abelian group and $k>0$. By Proposition \ref{classifyingspacemilgram}(2) and Theorem \ref{classificationnice}, the cohomology of $\tilde B$ can be identified with a quotient of $\mathcal Gp(\mathcal Top)(G^c,M^{k-1}Z)$ as follows:
$$H^k(\tilde B;Z)\cong\mathcal Top(\tilde B,M^kZ)/_{\simeq}\cong\mathcal Top(\tilde B,\tilde BM^{k-1}Z)/_{\simeq}=\mathcal Gp(\mathcal Top)(G^c,M^{k-1}Z)/_{\equiv}.$$
Note that a different choice for $\tilde B$ would lead to a compatible identification.
In particular, every characteristic class $c\in H^k(\tilde B;Z)$ in positive degree $k>0$ is represented by a homomorphism of topological groups
$$\gamma\colon G^c\to M^{k-1}Z,$$
which is determined up to algebraic equivalence. Note that $\gamma$ determines the cohomology class $c$ completely, as $c$ is represented by the homotopy class of
$$\xymatrix{c\colon\tilde B\simeq\tilde BG^c\ar[r]^-{\tilde B\gamma}&\tilde BM^{k-1}Z\simeq K(Z;n)}.$$
We therefore refer to $\gamma$ as the \textbf{looping homomorphism} of $c$.
\end{construction}

The looping homomorphism of of the first Stiefel-Whitney map is essentially the determinant map.

\begin{ex}
\label{examplesloopinghomomorphism}
Let $k=1$, $Z=\mathbb Z/2$, $G=O(n)$
and $c:=w_1\in H^1(BO(n);\mathbb Z/2)$ the first universal Stiefel-Whitney class. We construct (a representative for) the looping homomorphism $\omega_1\colon O(n)^c\to M^0(\mathbb Z/2)=\mathbb Z/2$ associated to $w_1$ as follows. Notice that the real Stiefel manifold $V(\mathbb R^\infty;n)$ is an $O(n)$-universal bundle with corresponding classifying space the real Grassmannian $Gr(\mathbb R^\infty;n)$, that admits the structure of a countable CW-complex. Use Theorem \ref{classificationnice} to assert the existence of a homomorphism of topological groups $O(n)^c=\tilde\Omega Gr(\mathbb R^\infty;n)\to O(n)$ such that
$$(\tilde\Omega Gr(\mathbb R^\infty;n)\to O(n))_*\tilde PGr(\mathbb R^\infty,n)\cong V(\mathbb R^\infty,n)\cong(\id\colon Gr(\mathbb R^\infty,n)\to Gr(\mathbb R^\infty,n))^*V(\mathbb R^\infty,n).$$
Such a map $O(n)^c\to O(n)$ is necessarily a weak equivalence by Remark \ref{adjunctionweakequivalence}. 
Then the homomorphism $\omega_1$ can be taken to be the composite
$$\omega_1\colon\colon \xymatrix{O(n)^c\ar[r]^-\sim&O(n)\ar[r]^-{\det}&O(1)\cong\mathbb Z/2=M^0(\mathbb Z/2)}.$$
\end{ex}

\subsection{The reduction group}
The following proposition exhibits a convenient model for the homotopy fiber of $\tilde B\gamma\colon\tilde BG^c\to\tilde BM^{k-1}Z$. Recall the notation for the tensor product $\alpha_* E =E\otimes^{\alpha}_GA$.

\begin{prop}
\label{fibersequence1}
Let $\alpha\colon G\to A$ be a homomorphisms of countable CW-groups.
\begin{enumerate}
\item The space $\alpha_* \tilde EG$ is a countable CW-complex.
\item The space $\alpha_* \tilde EG$ is the homotopy fiber of $\tilde B\alpha\colon\tilde BG\to\tilde BA$.
\item The space $\alpha_* \tilde EG$ is connected if and only if $\alpha$ is $\pi_0$-surjective.
\end{enumerate}
\end{prop}

\begin{proof}[Proof of Proposition \ref{fibersequence1}]
The space $\alpha_* \tilde EG =\tilde EG\otimes_GA$ is an $A$-bundle over $\tilde BG$.
By applying Proposition \ref{totalspaceCWcomplex} to $\alpha_* \tilde EG$, we conclude that $\alpha_* \tilde EG$ is a countable CW-complex, which proves (1).

The argument for (2) is a variant of the one used in \cite[Proof of Theorem 10.1]{mitchell2011}. The tensor product $ \tilde EG\otimes_G \tilde EA$, obtained by coequalizing the actions of $G$ on $\tilde EA$ and $\tilde EG$, is a fiber bundle over the countable CW-complex $\tilde BA$ with fiber the space $\tilde EG\otimes_GA$, which admits the structure of a countable CW-complex by (1).
Again by Proposition \ref{totalspaceCWcomplex}, $ \tilde EG\otimes_G \tilde EA$ admits therefore the structure of a countable CW-complex. We then consider the map
$$( \tilde EG,\tilde E\alpha)\colon  \tilde EG\to \tilde EG\times  \tilde EA,$$
a morphism of right $G$-spaces (where the action on $\tilde EG\times \tilde EA$ is on both factors), which are in fact $G$-bundles. The morphism $( \tilde EG,\tilde E\alpha)$ induces therefore a map at the level of quotients:
$$\overline{( \tilde EG,\tilde E\alpha)}\colon\tilde BG\to \tilde EG\otimes_G \tilde EA.$$
Moreover $( \tilde EG,\tilde E\alpha)$ is a weak homotopy equivalence (since its source and target are contractible), and restricts to a homeomorphism on the distinguished fibers because of the equivariance condition. By naturality of the homotopy long exact sequence induced by a fibration, the induced map $\overline{( \tilde EG,\tilde E\alpha)}$ is also a weak equivalence.
As $ \tilde BG$ and $ \tilde EG\otimes_G \tilde EA$ are (countable) CW-complexes, this map is in fact a homotopy equivalence.
Since the following diagram commutes
$$\xymatrix@R=.5cm{ \tilde EG\otimes_G \tilde EA\ar@{->>}[rr]^-{\pi'\circ\pr_2}&& \tilde BA\ar@{=}[d]\\
\ar[u]_-{\simeq}^-{\overline{(\tilde EG,\tilde E\alpha)}} \tilde BG\ar[rr]_-{\tilde B\alpha}&& \tilde BA,
}$$
the homotopy fiber of $\tilde B\alpha$ is equivalent to the (homotopy) fiber of the bundle projection\linebreak$\tilde EG\otimes_G \tilde EA\to\tilde BA$, which is $\tilde EG\otimes_GA= \alpha_* \tilde EG$, as desired.

For (3), we observe that, using the homotopy long exact sequence induced by fibrations, there is an exact sequence (of pointed sets)
$$\xymatrix{\pi_0(G)\ar[r]^-{\pi_0(\alpha)}&\pi_0(A)\ar[r]&\pi_0(\alpha_*\tilde EG )\ar[r]&0.}\qedhere$$
\end{proof}

We can now define the reduction group associated to a universal characteristic class.

\begin{dfnt}
\label{reductiongroup}
Let $\alpha\colon G\to A$ be a $\pi_0$-surjective homomorphisms of countable CW-groups.
The \textbf{reduction group} of $\alpha$ is the countable CW-group
$$\hat G(\alpha):=\tilde\Omega(\alpha_*\tilde EG ).$$
It comes with a map of topological groups $j\colon\hat G(\alpha)\to G$,
which is determined up to algebraic equivalence, obtained by adjoining the projection $\alpha_\star(\tilde EG)\to\tilde BG$ according to Theorem \ref{classificationnice}.
Note that the condition of $\pi_0$-surjectivity for $\alpha$ is automatically satisfied when $A$ is connected.
\end{dfnt}

We now focus on reduction groups produced by looping homomorphisms associated to universal characteristic classes.
\begin{dfnt}
\label{reductiongroupcharacteristicclass}
Let $G$ be a topological group that admits a countable CW-replacement, $Z$ a countable abelian group and $k>0$. Let $c\in H^k(BG;Z)$ be a universal characteristic class represented by a $\pi_0$-surjective looping homomorphism $\gamma\colon G^c\to M^{k-1}Z$ (as in Construction \ref{cocycle}). The \textbf{reduction group} of $c$ is the countable CW-group
$$\hat G(c):=\hat G(\gamma)=\tilde\Omega(\gamma_*\tilde EG^c ).$$
It comes with a homomorphism $\hat G(c)\stackrel{j}{\rightarrow}G^c\stackrel{\epsilon}{\rightarrow}G$, where $\epsilon$ is the countable CW-replacement, and the map $j$ is from Definition \ref{reductiongroup}.
Note that $\gamma:G^c\to M^{k-1}Z$ is $\pi_0$-surjective if and only if the cohomology class $c:\tilde BG^c\to K(Z;k)$ is $\pi_1$-surjective, which is automatically satisfied when $k>1$.
\end{dfnt}

The reduction group of a map $\alpha$ has a sort of universal property.
\begin{prop}
\label{universalpropertyreductiongroup}
Let $\alpha\colon G\to A$ be a $\pi_0$-surjective homomorphisms of countable CW-groups.
\begin{enumerate}
\item The classifying space of $\hat G(\alpha)$ is the homotopy fiber of $\tilde B\alpha\colon\tilde BG\to\tilde BA$.
\item If $G'$ is another topological group whose classifying space is the homotopy fiber of $\tilde B\alpha\colon\tilde BG\to\tilde BA$, there exists a homomorphism of topological groups $\hat G(\alpha)\to G'$ that is also a weak equivalence.
\end{enumerate}
\end{prop}

\begin{proof}
For (1), we note that $\tilde P(\alpha_*\tilde EG)$ is by Theorem \ref{classifyinggroup} a universal $\tilde\Omega(\alpha_*\tilde EG)$-bundle, i.e., a universal $\hat G(\alpha)$-bundle, with base space $\alpha_*\tilde EG$. Therefore, $\alpha_*\tilde EG$ is a classifying space for $\hat G(\alpha)$, and is therefore equivalent to $\tilde B\hat G(\alpha)$, which was proven in Proposition \ref{fibersequence1} to be the homotopy fiber of $\tilde B\alpha$.

For (2), we suppose that $G'$ is a topological group with a classifying space $B'$ that is the homotopy fiber of $\tilde B\alpha$. Without loss of generality, we can assume that $B'$ is a CW-complex, and is therefore homotopy equivalent to $\alpha_*\tilde EG$. Let $h\colon \alpha_*\tilde EG\to B'$ be a homotopy equivalence. Let $E'$ be a universal $G'$-bundle whose base space is $B'$.  Then $h^*E'$ is a universal $G'$-bundle, with base space $\alpha_*\tilde EG$. By Theorem \ref{classifyinggroup}, there exists a homomorphism of topological groups $a\colon \hat G(\alpha)=\tilde\Omega(\alpha_*\tilde EG)\to G'$ such that $h^*E'\cong a_*\tilde P(\alpha_*\tilde EG)$. Using (a variant of) Remark \ref{adjunctionweakequivalence}, we conclude that $a$ is a weak equivalence, as desired.
\end{proof}

This proposition can be used to check that the general construction $\hat G(c)$ recovers certain important topological groups. In particular, for certain interesting characteristic classes $c$, the reduction group $\hat G(c)$ is equivalent to a Lie group.
\begin{ex}
\label{standardmodels}
\begin{itemize}
	\item[>>] When $k=1$, $Z=\mathbb Z/2$, $G=O(n)$ and $c=w_1$ is the first Stiefel-Whitney class, it is known that the classifying space of the special orthogonal group $BSO(n)$ of $SO(n)$ fits into a homotopy fiber sequence
$$\xymatrix{BSO(n)\ar[r]&BO(n)\ar[r]^-{w_1}&K(\mathbb Z/2;1).}$$
By Proposition \ref{universalpropertyreductiongroup}(2), there exists a homomorphism of topological groups $\hat G(w_1)\to SO(n)$ which is also a weak equivalence.
	\item[>>] When $k=2$, $Z=\mathbb Z/2$, $G=SO(n)$ and $c=w_2$ is the second Stiefel-Whitney class, it is known that the classifying space $BS\text{pin}(n)$ of the spin group $S\text{pin}(n)$ fits into a homotopy fiber sequence
	$$\xymatrix{BS\text{pin}(n)\ar[r]&BSO(n)\ar[r]^-{w_2}&K(\mathbb Z/2;2).}$$
	By Proposition \ref{universalpropertyreductiongroup}(2), there exists a homomorphism of topological groups $\hat G(w_2)\to Spin(n)$ which is also a weak equivalence.
	\item[>>] When $k=4$, $Z=\mathbb Z/2$, $G=Spin(n)$ and $c=\frac12p_1$ is the first fractional Pontryagin class, it is known that the classifying space $BString(n)$ of the string group $String(n)$ fits into a homotopy fiber sequence
$$\xymatrix{BString(n)\ar[r]&BSpin(n)\ar[r]^-{\frac12p_1}&K(\mathbb Z/2;4).}$$
By Proposition \ref{universalpropertyreductiongroup}(2), there exists a homomorphism of topological groups $\hat G(\frac12p_1)\to String(n)$ which is also a weak equivalence.	
	\item[>>] When $k=2$, $Z=\mathbb Z$, $G=U(n)$ and $c=c_1$ is the first Chern class, it is known that the classifying space of the special unitary group $BSU(n)$ of $SU(n)$ fits into a homotopy fiber sequence
$$\xymatrix{BSU(n)\ar[r]&BU(n)\ar[r]^-{c_1}&K(\mathbb Z;2).}$$
By Proposition \ref{universalpropertyreductiongroup}(2), there exists a homomorphism of topological groups $\hat G(c_1)\to SU(n)$ which is also a weak equivalence.	
	\item[>>] When $k=2n$, $Z=\mathbb Z$, $G=U(n)$ and $c=c_n$ is the top Chern class, by Proposition \ref{universalpropertyreductiongroup}(1) $\tilde B\hat G(c_n)$ is the homotopy fiber of $c_n$.
	Using the standard identification of the cohomology of complex grassmannians with polynomial algebras, one can prove that the composite $\tilde BU(n-1)\to\tilde BU(n)\stackrel{c_n}{\rightarrow} K(\mathbb Z;2n)$ represents a trivial cohomology class in $H^{2n}(\tilde BU(n-1),\mathbb Z)$. Therefore $\tilde BU(n-1)\to\tilde BU(n)$ lifts up to homotopy to $\tilde B\hat G(c_n)$,
$$\xymatrix@R=.7cm{\tilde B\hat G(c_n)\ar[r]\ar@{}[rd]|(.3){\simeq}&BU(n)\ar@{}[d]|(.5){\simeq}\ar[r]^-{c_n}&K(\mathbb Z;2n)\\
\tilde BU(n-1)\ar[ru]\ar@/_1.5pc/[rru]_-0\ar@{-->}[u]&&}$$
By adjoining (according to Theorem \ref{classificationnice}) the homotopy commutative triangle to the left, with a little work one can build a diagram of topological groups of the following form, which commutes up to algebraic equivalence:
$$\xymatrix@R=.7cm{\hat G(c_n)^c\ar[r]\ar@{}[rd]|(.3){\equiv}|(.7){\equiv}&U(n).\\
U(n-1)^c\ar[ru]\ar[u]\ar[r]_-\sim&U(n-1)\ar@{^{(}->}[u]}$$
This can be rephrased by saying that, modulo replacing $U(n-1)$ with $U(n-1)^c$, the standard inclusion $U(n-1)\hookrightarrow U(n)$ factors through the reduction group $\hat G(c_n)$ of the top Chern class.

\end{itemize}
\end{ex}

The following remark gives a fair description of the homotopy groups of reduction groups associated to universal characteristic classes: the homotopy groups of $\hat G(\gamma)$ coincide with the homotopy groups of $G$ except possibly in degree $k-1$ and $k-2$.

\begin{rem}
\label{homotopygroupsreductiongroups}
Let $Z$ be a countable abelian group, $G$ a countable CW-group and $k\ge1$. Let $\alpha\colon G\to M^{k-1}Z$ be a $\pi_0$-surjective homomorphisms. We know by Proposition \ref{fibersequence1} that $\tilde B\hat G(\alpha)$ fits into a homotopy fiber sequence
$$\xymatrix{\tilde B\hat G(\gamma)\ar[r]&\tilde BG\ar[r]^-{\tilde B\alpha}&\tilde B M^{k-1}Z,}$$
and that $\tilde B M^{k-1}Z$ is an Eilenberg-MacLane space of type $K(Z;k)$. This implies that
$$\pi_i(\hat G(\gamma))\cong\pi_i(G)\text{ for }i\neq k-1,k-2,$$
and the remaining homotopy groups fit into an exact sequence of the form
$$\xymatrix{0\ar[r]&\pi_{k-1}(\hat G(\gamma))\ar[r]^-{}&\pi_{k-1}(G)\ar[r]&Z\ar[r]&\pi_{k-2}(\hat G(\gamma))\ar[r]^-{}&\pi_{k-2}(G)\ar[r]&0}.$$
\end{rem}

This fact can be used to build a \emph{Whitehead tower} \cite{whiteheadtower} of any topological group $G$ whose homotopy groups are countable (e.g. Lie groups). Since every layer of the Whitehead tower shares similarities with the \emph{cellularization} of \cite{farjoun}, inspired by \cite[Chapter 3, Theorem A.2]{farjoun} one could wonder whether every layer of the Whitehead tower of an H-space is itself an H-space. The following construction shows that the topological group $G$ admits a Whitehead tower made by topological groups (and homomorphisms of such).

\begin{construction}
\label{whiteheadtower}
Let $G$ be a topological group that admits a countable CW-replacement $\tilde G$. We inductively define a countable CW-group $G^{(n)}$ together with maps
$$j_n\colon G^{(n)}\to G^{(n-1)}\to\dots\to G^{(0)}\to G$$
that induce isomorphisms
$$\pi_i(G^{(n)})\cong\pi_i(G^{(n-1)})\cong\dots\cong\pi_i(G^{(0)})\cong\pi_i(G)\text{ for }i\ge n.$$
and such that
$$\pi_i(G^{(n)})\cong0\text{ for }i<n.$$
We first set $G^{(0)}:=\tilde G$, together with the map $\epsilon\colon \tilde G\to G$ that gives the countable CW-replacement. Suppose then $n>0$ and that $G^{(n-1)}$ and the map $j_{n-1}\colon G^{(n-1)}\to G$ have been constructed. By the universal coefficient theorem for cohomology and the Hurewicz Theorem, there is an isomorphism of abelian groups
$$\begin{array}{rclr}
\left[\tilde B(G^{(n-1)}),K\left(\pi_{n}(BG),n\right)\right]&\cong&H^{n}\left(\tilde B(G^{(n-1)});\pi_{n}(BG)\right)&\\
&\cong&\textrm{Hom}\left(H_{n}(\tilde B(G^{(n-1)});\mathbb Z),\pi_{n}(BG))\right)&\\
&\cong&\textrm{Hom}\left(\pi_{n}(\tilde B(G^{(n-1)})),\pi_{n}(BG))\right)&\\
&\cong&\textrm{Hom}\left(\pi_{n-1}(G^{(n-1)}),\pi_{n-1}(G))\right).&\\
\end{array}$$
One can check that the isomorphism above is precisely induced by applying $\pi_{n-1}$.
One of the elements of $\textrm{Hom}\left(\pi_{n-1}(G^{(n-1)}),\pi_{n-1}(G))\right)$ is the isomorphism $\pi_n(j_{n-1})\colon \pi_{n-1}(G^{(n-1)})\cong\pi_{n-1}(G)$
induced by the map $j_{n-1}\colon G^{(n-1)}\to G$. Let $c_n\colon\tilde B(G^{(n-1)})\to K(\pi_n(BG),n)$ be the element in $H^n(\tilde B(G^{(n-1)});\pi_n(BG))$ corresponding to $\pi_n(j_{n-1})$.
We then set $G^{(n)}:=\hat G(c_n)$,
together with the map $j_n\colon G^{(n)}=\hat G(c_n)\to G^{(n-1)}\stackrel{j_{n-1}}{\longrightarrow} G$.
\end{construction}

\begin{teorema}
Let $G$ be a topological group that admits a countable CW-replacement.
The tower of groups
$$\dots\to G^{(n+1)}\to G^{(n)}\to G^{(n-1)}\to\dots\to G^{(0)}\to G$$
from  Construction \ref{whiteheadtower} is a Whitehead tower for $G$.
\end{teorema}

\begin{proof}
The map $j_0:=\epsilon\colon \tilde G=G^{(0)}\to G$ is a weak equivalence by definition.
By construction, the exact sequence Remark \ref{homotopygroupsreductiongroups} (with $k=n$, $Z=\pi_n(BG)\cong\pi_{n-1}(G)$ and $\gamma=c_n$) becomes of the following form:
$$\xymatrix@C=.54cm{0\ar[r]&\pi_{n-1}(G^{(n)})\ar[r]^-{}&\pi_{n-1}(G^{(n-1)})\ar[rr]^-\cong_-{\pi_{n-1}(j_{n-1})}&&\pi_{n-1}(G)\ar[r]&\pi_{n-2}(G^{(n)}))\ar[r]^-{}&\pi_{n-2}(G^{(n-1)})\ar[r]&0},$$
which implies the vanishing of the homotopy groups of $G^{(n)}$ in degree $i=n-1,n-2$:
$$\pi_{n-2}(G^{(n)})\cong\pi_{n-2}(G^{(n-1)})\cong0\quad\text{ and }\quad\pi_{n-1}(G^{(n)})\cong0.$$
On the other hand, by Remark \ref{homotopygroupsreductiongroups}, we know that the homotopy groups in degree $i\neq n-2,n-1$ are given by
$$\pi_i(G^{(n)})=\pi_i(\hat G(c_n))\cong\pi_i(G^{(n-1)})=
\left\{
\begin{array}{ll}
0&\text{ if }i<n-2\\
\pi_i(G^{(n-1)})\cong\dots\cong\pi_i(G^{(0)})\cong\pi_i(G).&\text{ if }i>n-1
\end{array}
\right.$$
We therefore conclude that $\left(G^{(n)}\right)_{n\ge0}$ is a Whitehead tower for $G$.
\end{proof}

\subsection{Characteristic classes as obstructions to group reduction}

Given $G$ a topological group whose homotopy groups are countable (e.g., a Lie group) and a universal characteristic class $c$, we constructed its reduction group $\hat G(c)=\hat G(\gamma)$, together with a map $j:\hat G(\gamma)\to G$. In this section we prove that it detects the vanishing of the characteristic class $c$ for a $G$-bundle.
The first step is the following proposition.
\begin{prop}
\label{groupreduction}
Let $\alpha\colon G\to A$ be a $\pi_0$-surjective homomorphisms of countable CW-groups. For $E$ a $G$-bundle over a CW-complex $X$ classified by a map $t\colon X\to\tilde BG$, the following are equivalent.
\begin{itemize}
	\item[(a)] There exists a $\hat G(\alpha)$-bundle $\hat E$ over $X$ such that
$\hat E\otimes_{\hat G(\alpha)}G=j_*\hat E\cong E$.
	\item[(b)] The map $\tilde B\alpha\circ t\colon X\to\tilde BA$ is nullhomotopic.
\end{itemize}
\end{prop}
Proving Theorem \ref{groupreduction} from Proposition \ref{fibersequence1} is a standard argument, that we briefly recall.
\begin{proof}
Condition (b) says that $t$ is killed by $\tilde B\alpha$, which is equivalent to saying that $t$ factors up to homotopy through the homotopy fiber $\tilde B\hat G(\alpha)$ via a map $\hat t\colon X\to\tilde B\hat G(\alpha)$, as displayed
$$\xymatrix@R=.3cm{
&X\ar[d]^-t\ar[rd]^-0\ar@{-->}[ld]_-{\hat t}&\\
\tilde B\hat G(\alpha)\ar[r]^-{}_-{\tilde Bj}&\tilde BG\ar@{}[ru]|(.3){\simeq}\ar@{}[lu]|(.3){\simeq}\ar[r]_-{\tilde B\alpha}&\tilde BA.}$$
Note that, by Proposition \ref{naturality}, the map $\tilde Ej\colon\tilde E\hat G(\gamma)\to\tilde EG$ induces an equivalence
$$j_*\tilde E\hat G(\alpha)\cong(\tilde Bj)^*\tilde EG.$$
Finally, if we then let $\hat E:=\hat t^*\tilde E\hat G(\alpha)$, we have that
$$j_*\hat E\cong j_*\hat t^* \tilde E\hat G(\alpha)\cong\hat t^* j_* \tilde E\hat G(\alpha)\cong\hat t_* (Bj)^* \tilde EG\cong(\tilde Bj\circ\hat t)^* \tilde EG.$$
So $t$ is homotopic to $Bj\circ\hat t$ if and only if $j_* \hat E\cong t^* EG\cong E$, namely Condition (a).
\end{proof}

We are interested in applying the previous proposition to a homomorphism $\gamma\colon G^c\to M^{k-1}Z$ that represents a universal characteristic class $c$, as in Construction \ref{cocycle}. The following result shows that the universal characteristic class $c$ measures the obstruction to the reduction to a $\hat G(c)$-bundle. Recall the map $\epsilon\circ j\colon \hat G(c)\to G^c\to G$ from Definition \ref{reductiongroupcharacteristicclass}.
\begin{teorema}
\label{groupreductioncor}
Let $G$ be a topological group that admits a countable CW-replacement, $Z$ a countable abelian group and $k>0$. Let $c\in H^k(BG;Z)$ be a $\pi_1$-surjective universal characteristic class. For $E$ a $G$-bundle over a CW-complex $X$, the following are equivalent.
\begin{itemize}
	\item[(A)] There exists a $\hat G(c)$-bundle $\hat E$ over $X$ such that $\hat E\otimes_{\hat G(\gamma)}G=(\epsilon\circ j)_*\hat E\cong E$.
		\item[(B)] The characteristic class $c$ of $E$ vanishes, i.e., $c(E)=0\in H^k(X;Z)$.
\end{itemize}
\end{teorema}

\begin{proof}
By applying Proposition \ref{groupreduction} to $E^{\text{c}}$ (classified by a map $t\colon X\to\tilde BG^c$ with respect to the universal bundle $\tilde EG^c$) and to the looping homomorphism
$\gamma\colon G^{\text{c}}\to M^{k-1}Z$ associate to $c$ as in Construction \ref{cocycle}, we obtain that the following are equivalent.
\begin{itemize}
	\item[(a)] There exists a $\hat G(c)$-bundle $\hat E$ over $X$ and an equivalence $j_*\hat E\cong E^{\text{c}}$ of $G^{\text{c}}$-bundles over $X$.
	\item[(b)] The map $\tilde B\gamma\circ t\colon X\to\tilde BG^c\to\tilde BM^{k-1}Z$ is nullhomotopic.
\end{itemize}

The conditions (A) and (a) are equivalent by Proposition \ref{cofibrantreplacementbundle}. Indeed condition (a) is obtained from condition (A) by applying $(-)^c$, and (A) is obtained from (a) by applying ${\epsilon}_*$.

The conditions (b) and (B) are equivalent because the cohomology class $c(E)$ is represented by $\tilde B\gamma\circ t:X\to\tilde BM^{k-1}Z$. Indeed,
the same $t\colon X\to \tilde BG^c$ is also a classifying map for $E$ (with respect to the universal $G$-bundle $(B\epsilon\colon\tilde BG^c\to BG^c\to BG)^*(EG)$, and $\tilde B\gamma$ represents the cohomology class of the universal characteristic class $c$ (by Construction \ref{cocycle}).
\end{proof}

We use this theorem to recover and strengthen certain classical statements (cf. \cite{ms}). Recall that certain matrix groups $G'$ have been identified in Example \ref{standardmodels} with the reduction groups $\hat G(c)$ associated to certain characteristic classes $c$, via a map $\hat G(c)\to G'$ which is also a weak equivalence. By Proposition \ref{cofibrantreplacementbundle}, such a map induces a bijection
between the set of equivalence classes of $G'$-bundles over $X$ and the set of equivalence classes of $\hat G(c)$-bundles over $X$.
\begin{ex}
\begin{itemize}
	\item[>>] As an application of Theorem \ref{groupreductioncor}, the first Stiefel-Whitney class $w_1(E)$ of an $O(n)$-bundle $E$ vanishes if and only if there exists an $SO(n)$-bundle $\hat E$ such that $E\cong\hat E\otimes_{SO(n)}O(n)$, which coincides with one of the standard notions of orientability.
	\item[>>] As an application of Theorem \ref{groupreductioncor}, the second Stiefel-Whitney class $w_2(E)$ of an $SO(n)$-bundle $E$ vanishes if and only if there exists a $Spin(n)$-bundle $\hat E$ such that $E\cong\hat E\otimes_{Spin(n)}SO(n)$, which is equivalent the usual notion of existence of a spin structure.
	\item[>>] As an application of  Theorem \ref{groupreductioncor}, the first fractional Pontryagin class $\frac12p_1(E)$ of a $Spin(n)$-bundle $E$ vanishes if and only if there exists a $String(n)$-bundle $\hat E$ such that $E\cong\hat E\otimes_{String(n)}Spin(n)$, which refines (but is equivalent to) the usual notion of existence of a string structure.
	\item[>>] As an application of  Theorem \ref{groupreductioncor}, the first Chern class $c_1(E)$ of a $U(n)$-bundle $E$ vanishes if and only if there exists an $SU(n)$-bundle $\hat E$ such that $E\cong\hat E\otimes_{SU(n)}U(n)$.
	\item[>>] As an application of  Theorem \ref{groupreductioncor}, the top Chern class $c_n(E)$ of a $U(n)$-bundle $E$ vanishes if and only if there exists a $\hat G(c_n)$-bundle $\hat E$ such that $E\cong\hat E\otimes_{\hat G(c_n)}U(n)$.
 \item[>>] If $E$ is the frame bundle of a complex vector bundle $\mathcal E$ of rank $n$, we observe that $\mathcal E$ admits a nowhere vanishing section if and only if there exists a $U(n-1)$-bundle $\hat E'$ such that $E\cong\hat E'\otimes_{U(n-1)}U(n)$ (see \cite[Theorem 2.27]{cohen}). In this case, by setting $\hat E:=\hat E'\otimes_{U(n-1)}\hat G(c_n)$ and recalling from Example \ref{standardmodels} that the inclusion of $U(n-1)\hookrightarrow U(n)$ essentially factors through $\hat G(c_n)$, we obtain that
 $$E\cong \hat E'\otimes_{U(n-1)}U(n)\cong\hat E'\otimes_{U(n-1)}\hat G(c_n)\otimes_{\hat G(c_n)}U(n)=\hat E\otimes_{\hat G(c_n)}U(n).$$
In this case, Theorem \ref{groupreductioncor} tells us that the top Chern class $c_n(E)=c_n(\mathcal E)$ vanishes.
 This argument (re)proves that well-known fact that the top Chern class (i.e., the Euler class) of a complex vector bundle is an obstruction to the existence of a nowhere vanishing section. The fact that it is not the only obstruction is a symptom of the fact that $\hat G(c_n)$ is not homotopy equivalent to $U(n-1)$.
\end{itemize}
\end{ex}

\subsection{Characteristic classes as obstructions to the extension of the looping homomorphism}
Given $G$ a topological group whose homotopy groups are countable (e.g., a Lie group) and a universal characteristic class $c$, we constructed its looping homomorphism $\gamma$. In this section we prove that $\gamma$ detects the vanishing of the characteristic class $c$ for a $G$-bundle. The first step is the following proposition.

\begin{prop}
\label{retraction}
Let $\alpha\colon G\to A$ be a $\pi_0$-surjective homomorphisms of countable CW-groups. For $E$ a $G$-bundle over a CW-complex $X$ classified by a map $t\colon X\to\tilde BG$, the following are equivalent.
\begin{itemize}
	\item[(b)] The map $\tilde B\alpha\circ t\colon X\to\tilde BA$ is nullhomotopic.
	\item[(c)] The map of topological groups $\alpha\colon G\to A$ extends (along any $G$-equivariant inclusion $G\to E$) to an $\alpha$-equivariant map $E\to A$.
	\item[(d)] The bundle $\alpha_*\tilde EG$ is trivial, i.e, $\alpha_* E\cong X\times A$.
\end{itemize}
\end{prop}
\begin{proof}
We prove $[(b)\Leftrightarrow(d)]$. Using Proposition \ref{naturality} on the map $\tilde Ea\colon \tilde EG\to\tilde EA$, we obtain equivalences of $A$-bundles over $X$:
$$(\tilde Ba\circ t)^*\tilde EA\cong t^*(\tilde Ba)^*\tilde EA\cong t^* a_*\tilde EG\cong a_* t^*\tilde EG\cong a_* E.$$
By Theorem \ref{classificationnice}, the map $\tilde Ba\circ t$ is nullhomotopic if and only if $a_* E$ is equivalent to the trivial bundle.

We prove $[(c)\Rightarrow(d)]$. If there exists an $\alpha$-equivariant map $\kappa\colon E\to A$, then
$$(\pi,\kappa)\colon E\to X\times A$$
is an $\alpha$-equivariant map over $X$, which induces by Proposition \ref{naturality} an equivalence of $A$-bundles
$$\overline{(\pi,\kappa)}\colon \alpha_* E\cong X\times A.$$

We prove $[(d)\Rightarrow(c)]$. Given an equivalence $\phi\colon \alpha_* E\cong X\times A$ of $A$-bundles over $X$, we use it to construct the $\alpha$-equivariant map
$$\xymatrix{E\cong E\otimes_G G\ar[r]^-{E\otimes_G\alpha}&E\otimes_{G}A=\alpha_* E\ar[r]^-{\phi}_-\cong&X\times A\ar[r]^-{\pr_2}&A,}$$
which can be checked to extend $\alpha$.
\end{proof}

Again, we are interested in applying the previous proposition to the looping homomorphism $\gamma\colon G^c\to M^{k-1}Z$ associated to a universal characteristic class $c$, as in Construction \ref{cocycle}. The following result shows that the $\pi_1$-surjective characteristic class $c$ measures the obstruction to the equivariant extension of $\gamma$ to the whole bundle $E^c$.

\begin{teorema}
\label{retractioncor}
Let $G$ be a topological group that admits a countable CW-replacement, $Z$ a countable abelian group and $k>0$. Let $c\in H^k(BG;Z)$ be a $\pi_1$-surjective universal characteristic class with looping homomorphism $\gamma\colon G^{\textnormal{c}}\to M^{k-1}Z$. For $E$ a $G$-bundle over a CW-complex $X$, the following are equivalent.
\begin{itemize}
	\item[(B)] The characteristic class $c(E)$ of $E$ vanishes, i.e., $c(E)=0\in H^k(X;Z)$.
	\item[(C)] The map $\gamma\colon G^{\textnormal{c}}\to M^{k-1}Z$ extends to a $\gamma$-equivariant map $E^{\textnormal{c}}\to M^{k-1}Z$.
\end{itemize}
\end{teorema}

\begin{proof}
By applying Proposition \ref{retraction} to $E^{\text{c}}$ (classified by a map $t\colon X\to\tilde BG^c$ with respect to the universal bundle $\tilde EG^c$) and to the looping homomorphism
$\gamma\colon G^{\text{c}}\to M^{k-1}Z$ associate to $c$ as in Construction \ref{cocycle}, we obtain that the following are equivalent.
\begin{itemize}
	\item[(b)] The map $\tilde B\gamma\circ t\colon X\to\tilde BG^c\to\tilde BM^{k-1}Z$ is nullhomotopic.
	\item[(C)] The map $\gamma\colon G^{\textnormal{c}}\to M^{k-1}Z$ extends to a $\gamma$-equivariant pointed map $E^{\textnormal{c}}\to M^{k-1}Z$.
\end{itemize}
The equivalence between (b) and (B) was already established in the proof of Theorem \ref{groupreductioncor}.
\end{proof}

The theorem recovers another characterization of orientability.

\begin{ex}
We apply the theorem, and use the fact that the looping homomorphism of the fist Stiefel-Whitney class has been identified with the determinant map in Example \ref{examplesloopinghomomorphism}.
We obtain that the first Stiefel-Whitney class $w_1(E)$ of an $O(n)$-bundle $E$ vanishes if and only if there exists an $O(n)^c$-equivariant function $E^c\to\mathbb Z/2$ that extends $O(n)^c\to\mathbb Z/2$.
Since the topology of $\mathbb Z/2$ is discrete, this is equivalent to asking for the existence of a $\mathbb Z/2$-equivariant function $\pi_0(E)\cong\pi_0(E^c)\to\mathbb Z/2$, that extends the canonical isomorphism $\pi_0(O(n))\cong \pi_0(O(n)^c)\cong\mathbb Z/2$. This is in turn equivalent to asking for the existence of an $O(n)$-equivariant function $E\to\mathbb Z/2$ that extends the determinant map $O(n)\to\mathbb Z/2$.  This argument (re)proves that well-known fact that the first Stiefel-Whitney class vanishes if and only if there exists a coherent choice of sign for the bundle $E$, which is one of the usual definitions of orientability.
\end{ex}

\section{The plus-cohomology groups of a principal bundle}

Given a topological group $G$ whose homotopy groups are countable, $Z$ a countable abelian group, and $E$ a $G$-bundle, we exploit from Theorems \ref{groupreductioncor} and \ref{retractioncor} the idea that equivariant maps $E^c\to M^{k-1}Z$ enumerate the group reductions of a certain bundle $E$ with respect to some universal characteristic class in $H^k(BG;Z)$, and define an invariant that implements this point of view.

\subsection{The category of pointed bundles}

\label{The category of pointed bundles}
This section provides the correct framework to study the equivariant maps $E^c\to M^{k-1}Z$.

\vphantom{}

We say that a $G$-bundle $E$ over a pointed space $X$ is \textbf{pointed} if $E$ is endowed with a base point that makes the projection $E\to X$ pointed. Given $E$ and $E'$ two pointed $G$-bundles over $X$, we write $E\cong_*E'$ if there exists a pointed equivalence of $G$-bundles over $X$ between them. This clearly defines an equivalence relation.

\vphantom{}

The following is a variant of Theorem \ref{classificationnice} that takes into account the base points. The proof of the improved statement can be found in \cite{rovellitesi}. Given maps of countable CW-groups $a,b\colon G\to G'$ we say that they are \textbf{based algebraically equivalent} if $\tilde Ba\simeq_*\tilde Bb\colon \tilde BG\to\tilde BG'$. In this case we write $a\equiv_*b$.
\begin{teorema}[\cite{rovellitesi}]
\label{pointedclassificationnice}
Let $X$ be a connected countable CW complex pointed in a vertex and $G$ a countable CW-group. Then the assignments
$(-)_*(\tilde PX)$ and $(-)^*(\tilde EG)$
induce bijections
$$\mathcal Gp(\mathcal Top)(\tilde\Omega X,G)/_{\equiv_*}\ \cong\ _X\mathcal Bun_G/_{\cong_*}\ \cong\ \mathcal Top(X,\tilde BG)/_{\simeq_*}.$$
\end{teorema}

\begin{dfnt}
We denote by $\mathcal Bun_*$ the \textbf{category of pointed principal bundles}.
\begin{itemize}
\item An object consists of a triple $(X,G,E)$, where $X$ is a pointed topological space, $G$ is a topological group and $E$ is a pointed $G$-bundle over $X$. The space $X$ is the \textbf{base space}, the group $G$ is the \textbf{structure group} and the space $E$ is the \textbf{total space}. We denote this object by ${}_XE_G$, or just by $E$ when there is no risk of confusion.
\item A morphism ${}_XE_G\to{}_{X'}E'_{G'}$ consists of a triple $(\psi,g,a)$, where $g\colon X\to X'$ is a pointed map, $a\colon G\to G'$ is a homomorphism of topological groups, and $\psi\colon E\to E'$ is a pointed map that is $a$-equivariant and $g$-coequivariant. The pointed map $g$ is called the \textbf{geometric information}, the homomorphism $a$ is called \textbf{algebraic information}, and the pointed map $\psi$ is called \textbf{total information}. We denote this object by ${}_g\psi_a$, or just $\psi$ when there is no risk of confusion.
\item Composition and identity are defined in the obvious way.
\end{itemize}
\end{dfnt}
\label{pointedframework}
Restricting ourselves to pointed bundles allows a better interaction between the category of bundles and the categories of groups and spaces we work with. For instance, if $E$ is a pointed $G$-bundle over $X$, there is a canonical $G$-equivariant inclusion $G\to E$ determined by sending the identity of $G$ to the base point of $E$.

\begin{rem}
\label{mapsofbundles}
For every pointed bundle ${}_XE_G$, the canonical inclusion $\iota$ of the structure group and the projection $\pi$ on the base space are morphisms in $\mathcal Bun_*$, as displayed:
$$\xymatrix{{}_* G_G\ar[r]^-{{}_*\iota_G}&{}_XE_G\ar[r]^-{{}_X\pi_*}&{}_XX_*}.$$
Every morphism of bundles of the form ${}_X\psi_a\colon {}_XE_G\to{}_{X'}E'_{G'}$ induces $a$ on the distinguished fibers and $g$ on the base spaces, namely, there is a commutative diagram in $\mathcal Bun_*$:
$$\xymatrix@R=.4cm{
{}_* G_G\ar[r]^-{{}_*\iota_G}\ar[d]_-{{}_* a_a}&{}_XE_G\ar[r]^-{{}_X\pi_*}\ar[d]_-{{}_g\psi_a}&{}_XX_*\ar[d]^-{{}_gg_*}\\
{}_* {G'}_G'\ar[r]^-{{}_*\iota'_{G'}}&{}_{X'}E'_{G'}\ar[r]^-{{}_{X'}\pi'_*}&{}_{X'}X'_*.
}$$
\end{rem}

Using Remark \ref{mapsofbundles}, one can check that topological groups and pointed spaces embed fully faithfully into pointed bundles.
\begin{prop}
\label{embedding}
\begin{enumerate}
\item The assignment $X\mapsto{}_XX_*,$
which interprets a pointed space as a \textbf{discrete bundle} over itself with trivial structure group,
induces a fully faithful inclusion\linebreak$\mathcal Top_*\hookrightarrow\mathcal Bun_*$.
\item The assignment $G\mapsto{}_* G_G$,
which interprets a topological group as a \textbf{codiscrete $G$-bundle} over a point, induces a fully faithful inclusion $\mathcal Gp(\mathcal Top)\hookrightarrow\mathcal Bun_*$.
\label{homgroup}
\end{enumerate}
\end{prop}

\begin{rem}
\label{puppesequence}
Let $E$ be a pointed $G$-bundle over $X$, with $X$ a countable CW-complex pointed in a vertex
and $G$ a countable CW-group.
By Theorem \ref{pointedclassificationnice}, there exist pointed classifying maps
$$t\colon X\to \tilde BG\text{ and }b\colon \tilde\Omega X\to G$$
that yield pointed equivalences of $G$-bundle over $X$
$$t^*\tilde EG\cong_*E\cong_*b_*\tilde PX.$$
The maps $t$ and $b$ are unique up to based homotopy and based algebraic equivalence, respectively.
Having chosen two such maps $b$ and $t$, we can build the \textbf{Nomura-Puppe sequence} of the bundle
$$
\xymatrix{\tilde\Omega X\ar[r]^-b&G\ar[r]^-{\iota}&E\ar[r]^-{\pi}&X\ar[r]^-t&\tilde BG}
$$
which, thanks to Proposition \ref{embedding}, lives in the category $\mathcal Bun_*$ as
$$\xymatrix{{}_*\tilde\Omega X_{\tilde\Omega X}\ar[r]^-{{}_* b_b}&{}_* G_G\ar[r]^-{{}_*\iota_G}&{}_XE_G\ar[r]^-{{}_X\pi_*}&{}_XX_*\ar[r]^-{{}_tt_*}&{}_{\tilde BG}\tilde BG_*}.$$
\end{rem}

Using an Eckmann-Hilton argument, one can check that, for any abelian topological group $A$, the codiscrete $A$-bundle over a point $A\mapsto{}_* A_A$ is an abelian group object in $\mathcal Bun_*$.
As a consequence, for any topological abelian group $A$ the assignment ${}_XE_G\mapsto{\mathcal Bun_*}({}_XE_G,{}_* A_A)=:{\mathcal Bun_*}(E,A)$
defines a contravariant functor in abelian groups
$${\mathcal Bun_*}(-,A)\colon {\mathcal Bun_*}^{\textnormal{op}}\to\mathcal Ab,$$
where the group structure is given by pointwise multiplication.
By applying the functor $\mathcal Bun_*(-,A)$ to the Nomura-Puppe sequence from Remark \ref{puppesequence}, we obtain a diagram of abelian groups,
$$
\label{puppenonexactsequence}
\xymatrix{{\mathcal Bun_*}(\tilde BG,A)\ar[r]^-{t^*}&{\mathcal Bun_*}(X,A)\ar[r]^-{\pi_*}&{\mathcal Bun_*}(E,A)\ar[r]^-{\iota^*}&{\mathcal Bun_*}(G,A)\ar[r]^-{b^*}&{\mathcal Bun_*}(\tilde\Omega X,A),}
$$
which needs not be exact, and not even a complex of abelian groups.

\vphantom{}

We will define an equivalence relation $\simeq_+$ on ${\mathcal Bun_*}(E,A)$ with the following properties.
\begin{itemize}
\item The \textbf{plus-homgroup} $[E,A]_+$, defined as the quotient
$$[E,A]_+:={\mathcal Bun_*}(E,A)/_{\simeq_+},$$
is an abelian group with respect to pointwise multiplication.
\item When $A=M^kZ\simeq K(Z;k)$ is a $k$-fold Milgram delooping and $E=X$ is a discrete bundle or $E=G^{\text{c}}$ is a codiscrete bundle, the plus-homgroup recovers the ordinary cohomology (of the classifying space):
$$[X,M^kZ]_+\cong H^k(X;Z)\text{ \quad and\quad }[G^{\textnormal{c}},M^kZ]_+\cong H^{k+1}(\tilde BG;Z).$$
\item There is an exact sequence of abelian groups obtained by quotienting the diagram above,
$$
\xymatrix{[BG,A]_+\ar[r]^-{t^*}&[X,A]_+\ar[r]^-{\pi_*}&[E,A]_+\ar[r]^-{\iota^*}&[G,A]_+\ar[r]^-{b^*}&[\tilde\Omega X,A]_+}.
$$
\end{itemize}

\begin{rem}
\label{classifyingmap}
Let $G$ be a topological group, and $\tilde E$ a pointed universal bundle for $G$ over the pointed space $\tilde B$,
and $E$ a pointed $G$-bundle over a CW-complex $X$ pointed in a vertex. By a pointed variant of Theorem \ref{classificationGbundles} that takes into account the base point (and can be found in \cite{rovellitesi}), there exists a pointed classifying map $t\colon X\to\tilde B$ that induces a pointed equivalence $t^*\tilde E\cong_* E$. Such a map $t$ is unique up to based homotopy.
The map $t$ determines further data. More precisely, the following data are equivalent.
\begin{enumerate}
\item A pointed map $t\colon X\to\tilde B$ together with an equivalence $t^*\tilde E\cong E$;
\item A pointed $G$-equivariant map $T\colon E\to\tilde E$;
\item A pullback diagram of pointed spaces
$$\xymatrix@R=.4cm{E\pb\ar@{->>}[r]^-{\pi}\ar[d]_-T&X\ar[d]_-t\\\tilde E\ar@{->>}[r]_-{\tilde\pi}&\tilde B.}$$
\end{enumerate}
In this case, we refer to the map $T\colon E\to\tilde E$ as a \textbf{universal map} for $E$ and to the map $t\colon X\to\tilde B$ as a \textbf{pointed classifying map} for $E$ (with respect to $\tilde E$).
Note that both maps live in $\mathcal Bun_*$, as displayed
$${}_tT_G\colon {}_XE_G\to{}_{\tilde B}\tilde E_G\text{ and }{}_tt_*\colon {}_{\tilde B}\tilde B_*\to{}_{X}X_*.$$
\end{rem}

\subsection{Some distinguished classes of maps}

We now identify certain classes of maps in $\mathcal Bun_*$ which will play a fundamental role in defining the plus-cohomology groups. Most of these maps of bundles will be nullhomotopic when regarded as maps of spaces.

\begin{dfnt}
\label{typeofmorphisms}
Let $X$ be a CW-complex pointed in a vertex,
$G$ a countable CW-group, $E$ a pointed $G$-bundle over $X$, and $A$ a topological abelian group.
\begin{itemize}
	\item A morphism $\psi\colon E\to A$ in $\mathcal Bun_*$ is \textbf{of type U} if it factors through a universal map $T\colon E\to\tilde E$ in $\mathcal Bun_*$, where $\tilde E$ is a pointed $G$-bundle over a CW-complex $\tilde B$ pointed in one of its vertices such that $\tilde E$ admits the structure of a countable CW-complex and is contractible. Explicitly, this means that $\psi$ factors as
$${}_*\psi_a\colon \xymatrix{{}_XE_G\ar[r]^-{{}_tT_G}&{}_{\tilde B}\tilde E_G\ar[r]^-{{}_*\kappa_a}&{}_* A_A}$$
for some $\kappa\colon \tilde E\to A$ in ${\mathcal Bun_*}$.
	\item A morphism $\psi\colon E\to A$ in $\mathcal Bun_*$ is \textbf{of type P} if it factors though the projection $\pi\colon E\to X$ in $\mathcal Bun_*$, or equivalently if the algebraic information is the constant map at the neutral element $e_A$ of $A$. Explicitly, this means that that $\psi$ factors as
$${}_*\psi_*\colon \xymatrix{{}_XE_G\ar[r]^-{{}_\pi\pi_*}&{}_XX_*\ar[r]^-{{}_* s_*}&{}_* A_A}$$
for some $s\colon X\to A$ in ${\mathcal Bun_*}$.
	\item A morphism $\psi\colon E\to A$ in $\mathcal Bun$ is \textbf{of type UP} if it factors through $\tilde\pi\circ T=t\circ\pi\colon E\to\tilde B$ in ${\mathcal Bun_*}$, where $\tilde\pi\colon\tilde E\to\tilde B$ the projection of a pointed universal bundle that admits the structure of a countable CW-complex, $\tilde B$ is a good classifying space, $T\colon E\to \tilde E$ is a universal map and $t\colon X\to\tilde B$ is the corresponding pointed classifying map. Explicitly, this means that that $\psi$ factors as
$${}_*\psi_*\colon \xymatrix@R=.2cm{
{}_XE_G\ar@{=}[d]\ar[r]^-{{}_\pi\pi_*}&{}_XX_*\ar[r]^-{{}_tt_*}&{}_{\tilde B}\tilde B_*\ar[r]^-{{}_* s'_*}\ar@{=}[d]&{}_* A_A\ar@{=}[d]\\
{}_XE_G\ar[r]_-{{}_tT_G}&{}_{\tilde B}\tilde E_G\ar[r]_-{{}_{\tilde\pi}\tilde\pi_*}&{}_{\tilde B}\tilde B_*\ar[r]_-{{}_* s'_*}&{}_* A_A}$$
for some $s'\colon \tilde B\to A$ in ${\mathcal Bun_*}$.
As proven in Proposition \ref{propertiestypeofmorphisms}(4), this is equivalent to saying that $\psi$ is of type U and of type P, which justifies the terminology.
	\item A morphism $\psi\colon E\to A$ in $\mathcal Bun$ is \textbf{weakly of type UP} or \textbf{of type wUP} if it is of type P and the factor $s\colon X\to A$ occurring in the factorization $\psi=s\circ\pi$ factors up to based homotopy through a pointed classifying map $t\colon X\to\tilde B$, where $\tilde B$ is good pointed classifying space. In particular, this means that $\psi$ factors (up to based homotopy) as 
$${}_*\psi_*\colon \xymatrix@R=.2cm{{}_XE_G\ar@{=}[d]\ar[r]^-{{}_X\pi_*}&{}_XX_*\ar@{=}[d]\ar[rr]^-{{}_* s_*}&&{}_* A_*\ar@{=}[d]\\
{}_XE_G\ar[r]^-{{}_\pi\pi_*}&{}_XX_*\ar[r]^-{{}_tt_*}&{}_{\tilde B}\tilde B_*\ar@{}[u]|-{\simeq_*}\ar[r]^-{{}_* s'_*}&{}_* A_A}$$
for some $s\colon X\to A$ and $s'\colon \tilde B\to A$. As pointed out in Proposition \ref{propertiestypeofmorphisms}(3), this is a weaker condition than being of type UP, which justifies the terminology.
\end{itemize}
\end{dfnt}

\begin{rem}
\label{remtrace}
Every map of type U factors through a contractible space, and every map of type wUP is homotopic to a map of type UP, which factors through a contractible space. It follows that maps of types U and wUP are nullhomotopic when the bundle structure is disregarded and they are considered maps of spaces.
\end{rem}

We now prove useful closure properties of these distinguished classes of maps.
\begin{prop}
\label{propertiestypeofmorphisms}
Let $X$ be a CW-complex pointed in a vertex,
$G$ a countable CW-group, $E$ a pointed $G$-bundle over $X$ and $A$ an abelian topological group. Let $\psi$ and $\psi'$ be morphisms $E\to A$ in $\mathcal Bun_*$, and $\psi\cdot\psi'$ their pointwise multiplication.
\begin{enumerate}
	\item If $\psi$ and $\psi'$ are of type U, then $\psi\cdot\psi'$ is of type U.
	\item If $\psi$ and $\psi'$ are of type wUP, then $\psi\cdot\psi'$ is of type wUP.
	\item If $\psi$ is of type UP, then $\psi$ is of type wUP.
	\item $\psi$ is of type UP if and only if it is of type P and of type U.
\end{enumerate}
\end{prop}
\begin{proof}
We prove (1). As $\psi$ and $\psi'$ are of type U, by definition they factor through pointed universal bundles $\tilde E$ and $\tilde E'$ that admit the structures of countable CW-complexes,
	as displayed:
$$\psi\colon \xymatrix{E\ar[r]^-T&\tilde E\ar[r]^-\kappa&A}\text{ and }\psi'\colon \xymatrix{E\ar[r]^-{T'}&\tilde E'\ar[r]^-{\kappa'}&A.}$$
Then the product $\psi\cdot\psi'$ factors through $\tilde E\times\tilde E'$, as displayed
$$\psi\cdot\psi'\colon \xymatrix{E\ar[r]^-{(T,T')}&\tilde E\times\tilde E'\ar[r]^-{\kappa\times \kappa'}&A\times A\ar[r]^-\mu&A.}$$
We note that the pointed space $\tilde E\times\tilde E'$ is a pointed universal bundle for $G$ that admits the structure of a countable CW-complex. Indeed, it is contractible, and is a pointed $G$-bundle over $\tilde E\otimes_G\tilde E'$, which is a bundle over $\tilde B$ with fiber $\tilde E'$. Then $\tilde E\otimes_G\tilde E'$ is a countable CW-complex by Proposition \ref{totalspaceCWcomplex}, and $\tilde E\times\tilde E'$ is a countable CW-complex because it is a product of countable CW-complexes.

We prove (2). As $\psi$ and $\psi'$ are of type wUP, by definition they factor through the base space $X$, as displayed:
$$\psi\colon \xymatrix{E\ar[r]^-{\pi}&X\ar[r]^-s&A}\text{ and }\psi'\colon \xymatrix{E\ar[r]^-{\pi}&X\ar[r]^-{s'}&A.}$$
Then the product $\psi\cdot\psi'$ factors through $X$, as displayed
$$\psi\cdot\psi'\colon \xymatrix{E\ar[r]^-{\pi}&X\ar[r]^-{(s,s')}&A\times A\ar[r]^-\mu&A.}$$
Moreover, without loss of generality, $s$ and $s'$ factor up to based homotopy through some good classifying space $\tilde B$, as displayed:
$$s\colon \xymatrix{X\ar[r]^-{t}&\tilde B\ar[r]^-{\overline s}&A}\text{ and }s'\colon \xymatrix{X\ar[r]^-{t'}&\tilde B'\ar[r]^-{\overline s'}&A.}$$
As $\tilde E$ is a $G$-bundle and $\tilde E'$ is a universal bundle for $G$, there exists a pointed map $\tau\colon \tilde B\to\tilde B'$ such that
$$\tau^*\tilde E'\cong_*\tilde E.$$
We then have the following pointed equivalences of $G$-bundles over $X$:
$$(\tau\circ t)^*\tilde E'\cong_*t^*\tau^*\tilde E'\cong_*t^*\tilde E\cong_*E\cong_*t'^*\tilde E',$$
which implies that
$$t'\simeq_*\tau\circ t.$$
In other words, up to based homotopy, $s'\colon X\to A$ also factors through $\tilde B$, as
$$[s']\colon\xymatrix{X\ar[r]^-{t}&\tilde B\ar[r]^-{\tau}&\tilde B'\ar[r]^-{\overline s'}&A.}$$
Then the product $s\cdot s'$ factors up to based homotopy through $\tilde B$, as
$$[s\cdot s']\colon\xymatrix@C=1.5cm{X\ar[r]^-{t}&\tilde B\ar[r]^-{(\overline s,\overline s'\circ\tau)}&A\times A\ar[r]^-\mu&A.}$$
It follows that $\psi\cdot\psi'$ is of type wUP.

The statement (3) is straightforward from the definitions. We prove (4). The direct implication is straightforward. We prove the inverse direction. As $\psi$ is of type U, by definition $\psi$ factors through a universal bundle $\tilde E$ that admits the structure of a countable CW-complex as displayed
$$\psi\colon \xymatrix{
E \ar[r]^-T&\tilde E\ar[r]^-{\kappa}&A}.$$
	Then, as $\psi$ is of type P, the algebraic information $a(\kappa)$ of $\kappa$ vanishes, and we have that
$$0=a(\psi)=a(\kappa\circ T)=a(\kappa)\circ a(T)=a(\kappa)\circ\id{}_G=a(\kappa).$$
So the map $\kappa$ is of type P, and factors therefore through the quotient $\tilde E/G=\tilde B$.
Thus we can write $\psi$ as follows

$$\psi\colon\xymatrix@R=.35cm{E\ar@/^1.5pc/[rr]\ar[r]^-T\ar@{->>}[d]&\tilde E\ar[r]^-\kappa\ar@{->>}[d]&A\\
X\ar[r]_-t&\tilde B\ar[ru]&}$$
and see that it factors through the classifying space $\tilde B$. It follows that $\psi$ is of type UP.\qedhere
\end{proof}

\subsection{The plus-cohomology groups}
In this section, we define the plus-cohomology-groups of a bundle. Denote by $\psi^{-1}$ and by $\psi\cdot\psi'$ the pointwise inverse and the pointwise multiplication of maps $E\to A$, respectively.
\begin{dfnt}
Let $X$ be CW-complex pointed in a vertex,
$G$ a countable CW-group, $E$ a pointed $G$-bundle over $X$ and $A$ an abelian topological group.
\begin{itemize}
\item A \textbf{UP-decomposition} of $\psi\colon E\to A$ in ${\mathcal Bun_*}$ consists of a morphism $\psi_U\colon E\to A$ of type U and a morphism $\psi_P\colon E\to A$ of type P such that
$$\psi=\psi_U\cdot\psi_P\in{\mathcal Bun_*}(E,A).$$
\item A \textbf{UW-decomposition} of $\psi\colon E\to A$ in ${\mathcal Bun_*}$ consists of a morphism $\psi_U\colon E\to A$ of type U and a morphism $\psi_W\colon E\to A$ of type wUP such that
$$\psi=\psi_U\cdot\psi_W\in{\mathcal Bun_*}(E,A).$$
\item Two morphisms $\psi,\psi'\colon E\to A$ in ${\mathcal Bun_*}$ are \textbf{plus-equivalent}, and we write $\psi\simeq_+\psi'$, if and only if $\psi\cdot\psi'^{-1}\colon E\to A$ admits a UW-decomposition
$$\psi\cdot\psi'^{-1}=\psi_U\cdot\psi_W\in{\mathcal Bun_*}(E,A).$$
\end{itemize}
\end{dfnt}

Note that he relation $\simeq_+$ is an equivalence relation. Indeed, it is easily verified to be reflexive and symmetric, and is transitive as a consequence of Proposition \ref{propertiestypeofmorphisms} (1) and (2) and of the fact that $A$ is abelian. It is also a compatible with pointwise multiplication, again thanks to Proposition \ref{propertiestypeofmorphisms} (1) and (2). 

\begin{notation}
We denote by $[\psi]_+$ the class of $\psi\colon E\to A$ with respect to the relation $\simeq_+$, and by $[E,A]_+$ the quotient
$$[E,A]_+:={\mathcal Bun_*}(E,A)/_{\simeq_+},$$
which is an abelian group with respect to pointwise multiplication.

\vphantom{}

Recall the notation $E^c$ is from Notation \ref{notationc}. If $Z$ is a countable abelian group and $k\ge0$, we define the \textbf{plus-cohomology group} $H^k_+(E;Z)$ by
$$H^k_+(E;Z):=[E^{\text{c}},M^kZ]_+.$$
\end{notation}

\begin{rem}
Let $X$ be a connected space pointed in $x_0$, and $E$ an unpointed bundle over $X$. Given two points $e_0$ and $e_0'$ in the fiber $E|_{x_0}$, there exists a pointed equivalence
$$(E,e_0)\cong_*(E,e_0'),$$
which can be realized as follows. If $g_0$ is the element of $G$ such that $e_0'=e_o\cdot g_0$, then the map
$$\xymatrix{E\cong E\times0\ar[r]^-{E\times g_0}&E\times G\ar[r]^-\cdot&E}$$
is an equivalence and maps $e_0$ to $e_0'$.
Therefore, the pointed equivalence class of $E$ does not depend on the chosen base point. In particular, it makes sense to talk about the plus-cohomology of $E$ without specifying any base point for $E$.
\end{rem}

The notion of plus-equivalence generalizes both the notions of based algebraic homotopy for pointed maps and of based algebraic equivalence for homomorphisms of topological groups.
If $A$ is a topological group we denote by $e_A$ both the neutral element of $A$ and any map into $A$ constant at the value $e_A$.
\begin{prop}
\label{degeneratecase}
Let $A$ be an abelian countable CW-group.
 \begin{enumerate}
	\item Given a CW-complex $X$ pointed in a vertex, and pointed maps $s,s'\colon X\to A$ we have that
	$$s\simeq_+s'\colon X\to A\text{ if and only if }s\simeq_* s'\colon X\to A.$$
	\item Given a countable CW-group $G$, and maps of topological groups $a,a'\colon G\to A$ we have that
	$$a\simeq_+a'\colon G\to A\text{ if and only if }a\equiv_* a'\colon G\to A.$$
\end{enumerate}
\end{prop}

The following lemma is preliminary to Proposition \ref{degeneratecase}.
\begin{lem}
\label{lemmaEG}
For a map of topological groups $a\colon G\to A$ between countable CW-groups the following are equivalent.
\begin{enumerate}
\item The homomorphism $a$ extends to an $a$-equivariant pointed map $\kappa\colon \tilde EG\to A\text{ in }\mathcal Bun_*$.
\item There is a pointed equivalence $a_*\tilde EG\cong_*\tilde BG\times A$ of $A$-bundles over $\tilde BG$.
\item The homomorphism $a$ is based equivalent to the constant map, i.e., $a\equiv_* e_A\colon G\to A$.
\end{enumerate}
\end{lem}
\begin{proof}[Proof of Lemma \ref{lemmaEG}]
Recall that every pointed $A$-equivariant map between $A$-bundles over $\tilde BG$ that is compatible with the projection onto $\tilde BG$ is a pointed equivalence of $A$-bundles over $\tilde BG$.

We prove $[(1)\Rightarrow(2)]$. Since $\kappa$ is $a$-equivariant, it induces an $A$-equivariant map $\overline\kappa\colon a_*\tilde EG\to A$ and factors through $a_*\tilde EG$ as displayed:
$$\kappa\colon \xymatrix@C=2cm{\tilde EG\cong_*\tilde EG\otimes_GG\ar[r]^-{\tilde EG\otimes_Ga}&\tilde EG\otimes_GA=a_* \tilde EG\ar[r]^-{\overline\kappa}&A}.$$
Moreover, the projection of $a_* \tilde EG$ gives a map $\pi\colon a_* \tilde EG\to\tilde BG$.
Combining $\pi$ and $\overline\kappa$, we obtain a pointed equivalence of $A$-bundles over $\tilde BG$
$$(\pi,{\overline\kappa})\colon \xymatrix{a_* \tilde EG\ar[r]^-{\cong_*}&\tilde BG\times A.}$$

We prove $[(1)\Leftarrow(2)]$. Given a pointed equivalence $\phi\colon a_* \tilde EG\cong_*\tilde BG\times A$ of $A$-bundles over $\tilde BG$, the following map $\kappa$ is equivariant and extends the map $a$:
	$$\kappa\colon \xymatrix@C=1.3cm{\tilde EG\cong_*\tilde EG\otimes_GG\ar[r]^-{\tilde EG\otimes_Ga}&\tilde EG\otimes_GA=a_* \tilde EG\ar[r]^-\phi_-{\cong_*}&\tilde BG\times A\ar[r]^-{\pr_2}&A.}$$

We prove $[(2)\Leftrightarrow(3)]$. By Theorem \ref{pointedclassificationnice} there is a pointed equivalence between 
$\tilde BG\times A={e_A}_*\tilde EG$ and $a_*\tilde EG$ if and only if $a$ is based equivalent to the constant homomorphism $e_A\colon G\to A$.
\qedhere
\end{proof}

We can now prove the proposition.

\begin{proof}[Proof of Proposition \ref{degeneratecase}]
\begin{enumerate}
	\item We prove the direct implication. If $s\simeq_* s'\colon X\to A$, then\linebreak$s'\cdot s^{-1}\simeq_* e_A\colon X\to A$ factors up o based homotopy through a point, therefore $s'\cdot s^{-1}$ is of type wUP in ${\mathcal Bun_*}(X,A)$. So $s\simeq_+ s'$.

We prove the inverse implication. If $s\simeq_+ s'\colon X\to A$, then $s'\cdot s^{-1}=\psi_U\cdot\psi_W$ where $\psi_U\colon X\to A$ is of type U and $\psi_W\colon X\to A$ is of type wUP in ${\mathcal Bun_*}(X,A)$.
By Remark \ref{remtrace}, all maps of type U and all maps of type wUP are based homotopic to the constant pointed map, and therefore
$$s'\cdot s^{-1}=\psi_U\cdot\psi_W\simeq_* e_A\cdot e_A=e_A\colon X\to A.$$
It follows that $s\simeq_* s'$.
	\item We prove the direct implication. If $a\equiv_* a'\colon G\to A$, then $a'\cdot a^{-1}\equiv_* e_A\colon G\to A$. By Lemma \ref{lemmaEG}, $a'\cdot a^{-1}$ factors through $\tilde EG$, and is therefore of type U in ${\mathcal Bun_*}(G,A)$. It follows that $a\simeq_+a'$.

We prove the inverse implication. If $a\simeq_+ a'\colon G\to A$, then $a'\cdot a^{-1}=\psi_U\cdot\psi_W$ where $\psi_U\colon G\to A$ is of type U and $\psi_W\colon G\to A$ is of type wUP in ${\mathcal Bun_*}(G,A)$.
As $\psi_U$ is of type U, it factors through some classifying space $\tilde E$ for $G$. As $\tilde EG$ is a $G$-bundle and $\tilde E$ is a universal bundle for $G$, there exists a $G$-equivariant pointed map $EG\to\tilde E$.
Then $\psi_U$ also factors through $EG$, as displayed
$$\psi_U\colon\xymatrix@R=.3cm{\ar@{=}[d]G\ar@/^1.5pc/[rr]\ar@{^{(}->}[r]&\tilde E\ar[r]_-{\tilde\kappa}&A.\\
G\ar@{^{(}->}[r]&EG\ar[u]&}$$
By Lemma \ref{lemmaEG}, $\psi_U$ is then based algebraic equivalent to the constant homomorphism.
On the other hand, $\psi_W$ is constant because it factors through the base space of $G$, which is a point. Therefore
$$a'\cdot a^{-1}=\psi_U\cdot\psi_W=\psi_U\cdot e_A=\psi_U\equiv_*e_A\colon G\to A.$$
It follows that $a\equiv_* a'$.\qedhere
\end{enumerate}
\end{proof}

When $E=X$ is a discrete bundle, or $E=G^{\text{c}}$ is a codiscrete bundle, the plus-cohomology group recovers ordinary (group) cohomology.
\begin{prop}
\label{degeneratecasegroups}
Let $X$ be a CW-complex pointed in a vertex, $G$ a topological group that admits a countable CW-replacement, and $A$ an abelian countable CW-group. The following isomorphisms of abelian groups hold.
\begin{displaymath}
  \begin{array}{lrcl}
    1) & [{}_XX_*,{}_* A_A]_+&{}\cong{}&[X,A]_*\\
    2) & [{}_* G^{\text{c}}_{G^{\text{c}}},{}_* A_A]_+&{}\cong{}&[BG,MA]_*\\
    3) & [{}_X{X\times G^{\textnormal{c}}}_{G^{\textnormal{c}}},{}_X A_A]_+&{}\cong{}&[X,A]_*\oplus[BG,MA]_*\\
    4) &[{}_{\tilde BG}\tilde EG_{G},{}_X A_A]_+&{}\cong{}&0
  \end{array}
\end{displaymath}

\end{prop}

We need a preliminary lemma to prove (2).
\begin{lem}
\label{connectingmap}
For every abelian countable CW-group $A$ and any connected countable CW-complex $X$ pointed in a vertex there is an isomorphism of abelian groups
$$\xymatrix{[\tilde\Omega X,A]_+\ar[r]^-\cong^-{}&[X,MA]_+}.$$
\end{lem}
\begin{proof}[Proof of Lemma \ref{connectingmap}]
Let $\tau\colon \tilde BA\to MA$ be the natural based homotopy equivalence of Theorem \ref{classifyingspacemilgram}, and let $\eta_X\colon X\to\tilde B\tilde\Omega X$ be the classifying map of $\tilde PX$ over the universal bundle $\tilde E\tilde\Omega X$.
One can check that the map
$$\tau\circ\tilde B-\circ\eta_X\colon[\tilde\Omega X,A]_+\to [X,MA]_+$$
realizes the bijection
$$[\tilde\Omega X,A]_+\cong\mathcal Gp(\mathcal Top)(\tilde\Omega X,A)/_{\equiv}\cong\mathcal Top(X,\tilde BA)/_{\simeq}\cong\mathcal Top(X,MA)/_{\simeq}\cong [X,MA]_+,$$
obtained by means of Proposition \ref{degeneratecase}, Theorem \ref{classificationnice}, and Theorem \ref{classifyingspacemilgram}.
The map is also a group homomorphism. Indeed, for any $a,a'\colon \tilde\Omega X\to A$, the following diagram commutes up to based homotopy:
$$\xymatrix@R=.05cm@C=1.5cm{X\ar@/_3pc/[rrdddd]_(.3){(\tau\circ\tilde Ba\circ\eta_X,\tau\circ Ba'\circ\eta_X)}\ar[r]^-{\eta_X}&\tilde B\tilde\Omega X\ar@/^2pc/[rrrd]|-{\tilde B(a\cdot a')}\ar[rdd]|-{(Ba,Ba')}\ar[rrd]_-{B(a,a')}\ar@/_2pc/[rdddd]|-{(\tau\circ \tilde Ba,\tau\circ \tilde Ba')}&&&\\
&&&\tilde B(A\times A)\ar[dd]^-{\tau_{A\times A}}\ar[ld]^-{(\tilde B\pr_1,\tilde B\pr_2)}\ar[r]_-{B\mu_{A}}&\tilde BA\ar[dd]^-{\tau_A}\\
&&\tilde BA\times \tilde BA\ar[dd]_-{\tau_A\times\tau_A}&&\\
&&&M(A\times A)\ar@{=}[ld]\ar[r]_-{M\mu_{A}}&MA.\\
&&MA\times MA\ar@/_3pc/[rru]_-{\mu_{MA}}&&\\
&&&&\\
&&&&\square\qedhere\\
}$$
\end{proof}

We can now prove the proposition. Note that the proofs of (1), (2) and (4) follow from previous results. However, there is no elementary proof of (3). We here give two arguments, but they both use results that will appear later in the paper.

\begin{proof}
Statement (1) is a consequence of Proposition \ref{degeneratecase} and Statement (2) is a consequence of (1) and Lemma \ref{connectingmap}. Statement (4) is a consequence of the fact that, by definition, every map $EG\to A$ is of type U, and therefore plus-equivalent to the trivial map.

We prove (3). By Lemma \ref{lemmaprecomposition}, the assignments
$(s\colon X\to A)\mapsto(s\circ\pr{}_1\colon X\times G\to A)\text{ and }(a\colon G\to A)\mapsto(a\circ\pr{}_2\colon X\times G\to A)$
induce maps $[X,MA]_+\to [X\times G,A]_+\text{ and }[G,A]_+\to [X\times G,A]_+.$
They can be combined into a map $[X,MA]_+\oplus[BG,MA]_+\to [X\times G,A]_+$
which is given by $(s\colon X\to A,a\colon G\to A)\mapsto((s\circ\pr{}_1)\cdot(a\circ\pr{}_2)\colon X\times G\to A).$
One can prove directly that the map is a bijection. Alternatively, by using Theorem \ref{characteristicexactsequence} we obtain a short exact sequence of abelian groups
$$\xymatrix{\{0\}\ar[r]&[X,A]_+\ar[r]^-{\pi^*}&[X\times G,A]_+\ar[r]^-{\iota^*}&[G,A]_+\ar[r]&\{0\},}$$
which splits because the mentioned above map $[G,A]_+\to [X\times G,A]_+$ is a section.
\end{proof}

By specializing the last proposition to the case $A=M^kZ$, we obtain the following.

\begin{cor} 
\label{degeneratecasegroupscor}
Let $X$ be a CW-complex pointed in a vertex, $G$ a group that admits a countable CW-replacement, $Z$ a countable abelian group, and $k>0$. The following isomorphisms of abelian groups hold.
\begin{displaymath}
  \begin{array}{lrcl}
    1) & H^k_+({}_XX_*,Z)_+&{}\cong{}&H^k(X;Z)\\
    2) &H^k_+({}_* G_{G},Z)&{}\cong{}&H^{k+1}(BG;Z)\\
    3) &H^k_+({}_X X\times G_{G},Z)&{}\cong{}&H^{k+1}(BG;Z)\oplus H^k(X;Z)\\
    4) &H^k_+({}_{BG} EG_{G},Z)&{}\cong{}&0
  \end{array}
\end{displaymath}
\end{cor}
As a consequence of Remark \ref{remtrace}, if two maps of bundles are plus-equivalent, they they are also based homotopic as pointed maps of pointed spaces. This can be summarized by the following fact.

\begin{rem}
\label{trace}
For every pointed $G$-bundle $E$ over $X$, the identity yields a homomorphism of abelian groups
 $$[{}_XE_G,{}_* A_A]_+\to[E,A]_*.$$
By specializing to the case of $A=M^kZ$, and using the fact that the construction $(-)^c$ does not change the homotopy type, the map becomes
 $$H^k_+(E;Z)\to H^k(E^{\text{c}};Z)\cong H^k(E;Z).$$
This map is an isomorphism if $E=X$ is a discrete bundle. However it will not be in general, not even when the bundle is codiscrete.
\end{rem}

We now discuss the functoriality of the assignment $E\mapsto[E,A]_+$.
Although the group $\mathcal Bun_*(E,A)$ is functorial in $E$ before taking the quotient modulo $\simeq_+$, the property for a map to be of type U is not a priori compatible with precomposition of maps. This prevents us from considering $[E,A]_+$ as a functor in $E\in\mathcal Bun_*$. However, the assignment $[-,A]_+$ can act on at least three kinds of maps: $G$-equivariant morphisms of $G$-bundles, maps between codiscrete bundles,
and maps to a discrete bundle $X$, as stated by the following lemma. Note that the property (3) is counterintuitive.

\begin{lem}
\label{lemmaprecomposition}
Let $X$ and $X'$ be CW-complexes pointed in vertices,
$G$ and $G'$ countable CW-groups, $E$ a pointed $G$-bundle over $X$,  $E'$ a pointed $G'$-bundle over $X'$ and $A$ an abelian topological group. Let $\phi\colon E'\to E$ and $\psi\colon E\to A$ be morphisms in $\mathcal Bun_*$.
\begin{enumerate}
	\item When $G=G'$, if $\psi$ is of type U, then $\psi\circ\phi$ is of type U.
	\item When $G=G'$, if $\psi$ is of type wUP, then $\psi\circ\phi$ is of type wUP.
	\item When $E=X$, if $\psi$ is of type U, then $\psi\circ\phi$ is of type wUP.
	\item When $E=X$, if $\psi$ is of type wUP, then $\psi\circ\phi$ is of type wUP.
	\item When $E=G$ and $E'=G'$, if $\psi$ is of type U, then $\psi\circ\phi$ is of type wUP.
	\item When $E=G$ and $E'=G'$, if $\psi$ is of type wUP, then $\psi\circ\phi$ is of type wUP.
	\end{enumerate}
\end{lem}
\begin{proof}
\begin{enumerate}
	\item As $\psi$ is of type U, it factors through a universal bundle $\tilde E$ for $G$, as displayed:
$$\psi\colon \xymatrix{E\ar[r]^-{T}&\tilde E\ar[r]^-\kappa&A.}$$
Then $\psi\circ\phi$ factors through $\tilde E$, as displayed
$$\psi\circ\phi\colon \xymatrix{E'\ar[r]^-{\phi}&E\ar[r]^-{T}&\tilde E\ar[r]^-\kappa&A.}$$
It follows that $T\circ\phi$ is a universal map and $\psi\circ\phi$ is of type U.
	\item As $\psi$ is of type wUP, it factors through the base space $X$, as displayed:
$$\psi\colon \xymatrix{E\ar[r]^-{\pi}&X\ar[r]^-{s}&A}$$
If $\pi'\colon E'\to X'$ is the projection of $E'$ and $g\colon X\to X'$ is the geometric information of $\phi$, then $\psi\circ\phi$ factors through $\pi\circ\phi=g\circ\pi'$ as displayed:
$$\psi\circ\phi\colon \xymatrix{E'\ar[r]^-{\pi'}&X'\ar[r]^-{g}&X\ar[r]^-{s}&A.}$$
This proves that $\psi\circ\phi$ is of type $P$.
Moreover, since $\psi$ is of type wUP, the map $s$ factors up to based homotopy through a classifying space $\tilde B$ of $G$, as displayed
$$\xymatrix{X\ar[r]^-{t}&\tilde B\ar[r]^-{\overline s}&A,}$$
where $t$ is a classifying map for $E$ with respect to a universal bundle $\tilde E$.
Then the map $s\circ g$ factors up to based homotopy through the classifying space $\tilde B$ of $G$, as displayed
$$\xymatrix{X'\ar[r]^-{g}&X\ar[r]^-{t}&\tilde B\ar[r]^-{\overline s}&A,}$$
where $t\circ g$ happens to be a classifying map for $E'$ with respect to $\tilde E$. Indeed, using Proposition \ref{naturality} on $\phi:E\to E'$, we obtain pointed equivalences of $G$-bundles over $X'$:
$$
E'\cong_* g^* E\cong_* g^* t^* \tilde E\cong_*(t\circ g)^*\tilde E.
$$

It follows that $\psi\circ\phi$ is of type wUP.

\item As $\psi$ is of type U, it is based homotopic to a constant map by Remark \ref{remtrace}. Now, if $\pi'\colon E'\to X'$ is the projection of $E'$ and $g\colon X\to X'$ is the geometric information of $\phi$, then $\psi\circ\phi$ factors through the base space $X'$ as displayed:
$$\psi\circ\phi\colon \xymatrix{E'\ar[r]^-{\pi'}&X'\ar[r]^-{g}&X\ar[r]^-{\psi}&A.}$$
This proves that $\psi\circ\phi$ is of type P.
Moreover the map $\psi\circ g$ is based homotopic to the trivial map because $\psi$ is. So $\psi\circ g$ factors up to based homotopy through the classifying space $\tilde BG'$ of $G'$, as displayed
$$\xymatrix{X'\ar[r]^-{t'}&\tilde BG'\ar[r]^-{e_A}&A,}$$
It follows that $\psi\circ\phi$ is of type wUP.

\item As $\psi$ is of type wUP, it is based homotopic to a constant map by Remark \ref{remtrace}. The same argument used in (3) applies.
It follows that $\psi\circ\phi$ is of type wUP.

\item As $\psi$ is of type U, it factors through some universal map $\overline\kappa\colon\tilde E\to A$ as
$$\psi\colon \xymatrix@R=.3cm{G\ar@{^{(}->}[r]&\tilde E\ar[r]^-{\tilde\kappa}&A}.$$
As $\tilde EG$ is a $G$-bundle and $\tilde E$ is a universal bundle for $G$, there exists a $G$-equivariant pointed map $\beta\colon\tilde EG\to\tilde E$ and the inclusion $G\hookrightarrow\tilde E$ factors through it as
$$ \xymatrix@R=.3cm{G\ar@{^{(}->}[r]&\tilde EG\ar[r]^-{\beta}&\tilde E}.$$
In conclusion, $\psi\circ\phi$ factors through $\tilde EG'$, as displayed
$$\psi\circ\phi\colon \xymatrix{G'\ar@{^{(}->}[r]&\tilde EG'\ar[r]^-{\tilde E\psi}&\tilde EG\ar[r]^-{\beta}&\tilde E\ar[r]^-{\overline\kappa}&A.}$$
It follows that $\psi\circ\phi$ is of type U.
\item As $\psi$ is of type wUP, it factors through a singleton, and is therefore constant. Then $\psi\circ\phi$ is constant and is in particular of type wUP.\qedhere
\end{enumerate}
\end{proof}
The first consequence of Lemma \ref{lemmaprecomposition} is that the assignment $E\mapsto[E,A]_+$ defines a functor on the (non-full!) subcategory ${\mathcal Bun_*}_G$ of $\mathcal Bun_*$ given by of pointed $G$-bundles and $G$-equivariant maps.
\begin{prop}
Let $X$ be a CW-complex pointed in a vertex,
$G$ a countable CW-group, $E$ a pointed $G$-bundle over $X$ and $A$ an abelian countable CW-group.  Then the assignment
$$E\mapsto\mathcal [E,A]_+:={\mathcal Bun_*}(E,A)/_{\simeq_+}$$
defines a contravariant functor of $G$-bundles into abelian groups
$$[-,A]_+:{\mathcal Bun_*}_G^{\textnormal{op}}\to\mathcal Ab.$$
\end{prop}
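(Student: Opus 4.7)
The plan is to verify that the precomposition assignment $\phi^\star := -\circ\phi$ descends to a well-defined group homomorphism $[E,A]_+ \to [E',A]_+$ for every $G$-equivariant morphism $\phi\colon E'\to E$ in $\mathcal Bun\star_G$, and then to check the two functoriality axioms. Since identity and composition of maps in $\mathcal Bun\star_G$ are strict, and since precomposition $\psi\mapsto\psi\circ\phi$ automatically respects pointwise multiplication in $A$ (i.e.\ $(\psi\cdot\psi')\circ\phi = (\psi\circ\phi)\cdot(\psi'\circ\phi)$), the only substantive issue is well-definedness on $\simeq_+$-classes.

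For well-definedness, I would argue as follows. Suppose $\psi\simeq_+\psi'$ in $\mathcal Bun\star(E,A)$. By the definition of $\simeq_+$, there exist a morphism $\psi_w\in\mathcal M_w$ of type w and a morphism $\psi_k\in\mathcal M_k$ of type k with
\[
\psi\cdot\psi'^{-1} \;=\; \psi_w\cdot\psi_k \ \in\ \mathcal Bun\star(E,A).
\]
Precomposing with $\phi\colon E'\to E$, which is $G$-equivariant (so that both $E$ and $E'$ sit in $\mathcal Bun\star_G$ with the same structure group), we obtain
\[
(\psi\circ\phi)\cdot(\psi'\circ\phi)^{-1} \;=\; (\psi_w\circ\phi)\cdot(\psi_k\circ\phi) \ \in\ \mathcal Bun\star(E',A).
\]
Now I invoke Lemma \ref{lemmaprecomposition}: part (1) (same structure group $G=G'$) gives $\psi_k\circ\phi\in\mathcal M'_k$, and part (2) gives $\psi_w\circ\phi\in\mathcal M'_w$. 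Hence the right-hand side exhibits a wk-decomposition of $(\psi\circ\phi)\cdot(\psi'\circ\phi)^{-1}$, so $\psi\circ\phi\simeq_+\psi'\circ\phi$. Thus $\phi^\star$ induces a well-defined map $[E,A]_+\to[E',A]_+$, which is a group homomorphism by the displayed multiplicativity of precomposition, and which is compatible with identities and composition by the same strict reasoning applied before passing to quotients.

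The main obstacle—the only subtle point—is precisely the verification that precomposing a type-k (respectively type-w) morphism by a $G$-equivariant map of $G$-bundles stays type-k (respectively type-w), which is exactly what the hypothesis $G=G'$ of Lemma \ref{lemmaprecomposition}(1)--(2) gives us. This is why we must restrict to the subcategory $\mathcal Bun\star_G$ rather than work on all of $\mathcal Bun\star$: if $\phi$ were allowed to change the structure group, a universal bundle for $G$ would not in general pull back to a universal bundle for $G'$, and the factorization witnessing type k would not transfer along $\phi$. No additional verification beyond these citations of Lemma \ref{lemmaprecomposition} is needed, so the proof is essentially a two-line computation followed by a routine bookkeeping check of the functor axioms.
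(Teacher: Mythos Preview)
Your proof is correct and follows exactly the paper's approach: the paper's proof is the single sentence ``As a consequence of Lemma \ref{lemmaprecomposition}(1) and (2), plus-equivalence relation is a congruence with respect to precomposition,'' and you have simply unpacked this by showing explicitly how parts (1) and (2) of that lemma transfer the wk-decomposition along a $G$-equivariant $\phi$.
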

\begin{proof}
By Lemma \ref{lemmaprecomposition}(1) and (2), plus-equivalence is compatible with precomposition.
\end{proof}

The second consequence of Lemma \ref{lemmaprecomposition} is that the assignment $E\mapsto[E,A]_+$ acts on the Nomura-Puppe sequence of Remark \ref{puppesequence} and produces a sequence of abelian groups,
$$\xymatrix{[\tilde BG,A]_+\ar[r]&[X,A]_+\ar[r]&[E^{\textnormal{c}},A]_+\ar[r]&[G^{\textnormal{c}},A]_+\ar[r]&[ \tilde\Omega X,A]_+},$$
which will be proven to be exact in the next section.
When $A=M^kZ$, this sequence becomes
$$\xymatrix{H^k(\tilde BG;Z)\ar[r]&H^k(X;Z)\ar[r]&H^k_+(E;Z)\ar[r]&H^k_+(G;Z)\ar[r]&H^k( \tilde\Omega X,Z)}.$$

\subsection{The long exact sequence}
In this section we prove that the sequence above is exact and can be extended to be a long exact sequence.
\begin{prop}
\label{characteristicexactsequence}
Let $X$ be a countable CW-complex pointed in a vertex, $G$ a countable CW-group, $E$ a pointed $G$-bundle over $X$ and $A$ an abelian countable CW-group. There is an exact sequence of abelian groups
$$\xymatrix{[\tilde BG,A]_+\ar[r]^-{-\circ t}&[X,A]_+\ar[r]^-{-\circ\pi}&[E,A]_+\ar[r]^-{-\circ\iota}&[G,A]_+\ar[r]^-{-\circ b}&[ \tilde\Omega X,A]_+}.$$
\end{prop}
We need a preliminary lemma.
\begin{lem}
\label{decomposition} 
Let $X$ be a CW-complex pointed in a vertex,
$G$ a countable CW-group,
$E$ a pointed $G$-bundle over $X$ and $A$ an abelian countable CW-group.
Every $a$-equivariant pointed morphism $\psi\colon E\to A$
such that $a\equiv_* e_A\colon G\to A$ admits a UP-decomposition.
\end{lem}
In the proof, if $\psi\colon E\to A$ is a map of bundles, we denote by $\psi^{-1}\colon E\to A$ the map obtained by inverting pointwise in $A$ the map $\psi$.
\begin{proof}[Proof of Lemma \ref{decomposition}]
We first fix a $G$-equivariant pointed map $T\colon E\to \tilde EG$, that induces a classifying map $t\colon X\to \tilde BG$. In particular there is a commutative diagram
$$\xymatrix@R=.3cm{E\ar[d]_-\pi\ar[r]^-T&\tilde EG\ar[d]^-{\tilde\pi}\\
X\ar[r]_-{t}&\tilde BG.}$$
As $a\equiv_* e_A$, by Lemma \ref{lemmaEG} $a$ extends to a pointed $a$-equivariant map  $\kappa\colon \tilde EG\to A$. Recalling the proof of Lemma \ref{lemmaEG}, we see that $\kappa$ factors through $a_* \tilde EG$, as displayed
$$\kappa\colon \xymatrix@C=1.2cm{\tilde EG\cong\tilde EG\otimes_GG\ar[r]^-{\tilde EG\otimes_Ga}&\tilde EG\otimes_GA=a_* \tilde EG\ar[r]^-{\overline\kappa}&A.}$$
The map $\overline{\kappa}$ induces pointed equivalence of $A$-bundles over $\tilde BG$ and $X$
$$(\tilde\pi,\overline\kappa)\colon a_* \tilde EG\to \tilde BG\times A\text{ and }(\pi,\overline\kappa\circ a_*T)\colon a_*E\to X\times A$$
which fits into a commutative diagram
$$\xymatrix@C=2cm@R=.5cm{E\ar[r]^-{E\otimes_Ga}\ar[d]_-T&a_*E\ar[r]^-{(\pi,\overline\kappa\circ a_*T)}_-\cong\ar[d]_-{a_*T}&X\times A\ar[d]^-{t\times A}\\
\tilde EG\ar[r]^-{\tilde EG\otimes_Ga}&a_*\tilde EG\ar[r]^-{(\tilde\pi,\overline\kappa)}_-\cong&\tilde BG\times A.
}$$
On the other hand, since $\psi$ is $a$-equivariant, it induces an $A$-equivariant map $\overline\psi\colon a_*E\to A$ and factors through $a_* E$ as displayed
$$\psi\colon \xymatrix{E\cong E\otimes_GG\ar[r]^-{E\otimes_Ga}&E\otimes_GA=a_* E\ar[r]^-{\overline\psi}&A.}$$
Using the equivalence $a_*E\cong_* X\times A$, $\psi$ factors through $X\times A$, too. The map $X\times A\to A$ occurring in such a factorization for $\psi$ is obtained by composing the inverse of $(\overline\pi,\overline\kappa\circ a_* T)$ and $\overline\psi$, which are both $A$-equivariant.
So the composite is also $A$-equivariant, and thus it must be of the form
$$s\cdot A\colon \xymatrix{X\times A\ar[r]^-{s\times A}&A\times A\ar[r]^-{\cdot}&A,}$$
where $s\colon X\to A$ is a pointed map.
There is therefore a commutative diagram
$$\xymatrix@C=2cm@R=.5cm{&&&A.\\
E\ar[r]^-{E\otimes_Ga}\ar[d]_-T\ar@/^2pc/[rrru]^-{\psi}&a_*E\ar[r]_-{(\pi,\overline\kappa\circ a_*T)}^(.6)\cong\ar[d]_-{a_*T}\ar@/^.5pc/[rru]^-{\overline\psi}&X\times A\ar[d]^-{t\times A}\ar[ru]_-{s\cdot A}&\\
\tilde EG\ar[r]^-{\tilde EG\otimes_Ga}&a_*\tilde EG\ar[r]^-{(\tilde\pi,\overline\kappa)}_-\cong&\tilde BG\times A&
}$$
There is no reason for $s\cdot A$ to factor further through $\tilde BG\times A$. Using the fact that both $\psi$ and $(s\circ\pi)^{-1}$ both factor through $a_* E$ and $X\times A$, we write a diagram similar to the one above for
$$\psi_U:=\psi\cdot(s\circ\pi)^{-1}\colon E\to A$$
instead of $\psi$.
The key fact is that $\psi_U$ is designed such that
the map $X\times A\to A$ occurring in the factorization of $\psi_U$ is now just a projection on the second factor. In particular $\psi_U$ factors through $\tilde BG\times A$, as displayed
$$\xymatrix@C=2cm@R=.5cm{&&&&A.\\
E\ar[r]^-{E\otimes_Ga}\ar[d]_-T\ar@/^2pc/[rrrru]^-{\psi_U=\psi\cdot(s\circ\pi)^{-1}}&a_*E\ar[r]_-{(\pi,\overline\kappa\circ a_*T)}^(.6)\cong\ar[d]_-{a_*T}\ar@/^.5pc/[rrru]^-{\overline\psi\cdot(s\circ\overline\pi)^{-1}}&X\times A\ar[d]^-{t\times A}\ar[rru]|-{\quad\pr_2=(s\cdot A)\cdot(s\circ\pr_1)^{-1}}&&\\
\tilde EG\ar[r]^-{\tilde EG\otimes_Ga}&a_*\tilde EG\ar[r]^-{(\tilde\pi,\overline\kappa)}_-\cong&\tilde BG\times A\ar@/_1pc/[rruu]_-{\pr_2}&&
}$$
As a consequence $\psi_U$ factors through the universal bundle $\tilde EG$ for $G$, and is therefore of type U.

We then set
$$\psi_P:=s\circ\pi\colon E\to A,$$
which is of clearly type P, and the desired factorization of $\psi$ follows:
$$\psi=(s\circ\pi)\cdot(\psi\cdot(s\circ\pi)^{-1})=\psi_P\cdot\psi_U.\qedhere$$
\end{proof}

We can now prove the proposition.
\begin{proof}[Proof of Proposition \ref{characteristicexactsequence}]
{We prove the exactness in $[X,A]_+$:} In order to show that $\im(t^*)\subset\ker(\pi^*)$, take $[s:\tilde BG\to A]_+$. As $s\circ t\circ\pi$ clearly factors through $\tilde BG$ as
$$s\circ t\circ\pi:\xymatrix{E\ar[r]^-\pi&X\ar[r]^-t&\tilde BG\ar[r]^-s&A,}$$
it is of type UP and
$$\pi^*(t^*([s]_+))=[s\circ t\circ\pi]_+=[e_A]_+,$$
namely $t^*([s]_+)\in\ker(\pi^*)$.
In order to show that $\im(t^*)\supset\ker(\pi^*)$, take $[s:X\to A]_+\in\ker(\pi^*)$. Then
$$[e_A]_+=\pi^*([s]_+)=[s\circ\pi]_+.$$
Then there exists $\psi_W:E\to A$ of type wUP and $\psi_U:E\to A$ of type U such that $s\circ\pi=\psi_W\cdot\psi_U$. As $\psi_U$ is of type U, it factors through a universal bundle $\tilde E$ for $G$ as
$$\psi\colon \xymatrix{
E \ar[r]^-T&\tilde E\ar[r]^-{\kappa}&A}.$$
As $\psi_W$ is of type wUP, in factors through $X$ as
$$\psi\colon \xymatrix{
E \ar[r]^-\pi&X\ar[r]^-{g_W}&A}.$$
Since $s\circ\pi$ and $\psi_W$ both factor through $\pi$, their algebraic information maps $a(s\circ\pi)$ and $a(\psi_W)$ are trivial. It follows that also the algebraic information $a(\psi_U)$ of $\psi_U$ is also trivial, since
$$a(\psi_U)=a((s\circ\pi)\cdot\psi_W^{-1})=a(s\circ\pi)\cdot a(\psi_W)^{-1}=e_A\cdot e_A=e_A.$$
Thus $\kappa$ factors through the quotient $\tilde B=\tilde E/G$, as
$$\psi\colon \xymatrix{
\tilde E \ar[r]^-{\tilde\pi}&\tilde B\ar[r]^-{s_U}&A.}$$
Define $g_U$ as the composite of $t$ and $s_U$, as displayed:
$$\xymatrix@C=2cm@R=.5cm{E\ar[r]_-T\ar@{->>}[d]\ar@/^1pc/[rr]^-{\psi_U}&\tilde E\ar@{->>}[d]\ar[r]&A.\\
X\ar[r]^-t\ar@/_2pc/[rru]_-{g_U}&\tilde B\ar[ru]^-{s_U}&}$$
Since $\psi_W$ is of type wUP, the map $g_W$ occurring in its factorization factors up to based homotopy through any classifying space for $G$, and in particular through $\tilde B$, as displayed
$$\xymatrix@C=2cm@R=.5cm{E\ar[r]_-\pi\ar@/^1pc/[rr]^-{\psi_W}&X\ar@{}[rd]|(.3){\simeq_*}\ar@{->>}[d]\ar[r]|-{g_W}&A.\\
&\tilde B\ar[ru]_-{s_W}&}$$
Therefore
$$s\circ\pi=\psi_U\cdot\psi_W=(g_U\circ\pi)\cdot(g_W\circ\pi)=(g_U\cdot g_W)\circ\pi.$$
and, since $\pi$ is surjective,
$$s=g_U\cdot g_W=(s_U\circ t)\cdot g_W\simeq_*(s_U\circ t)\cdot(s_W\circ t)=(s_U\cdot s_W)\circ t.$$
Finally
$$[s]_+=[(s_U\cdot s_W)\circ t]_+=t^*([s_U\cdot s_W]_+)\in\im(t^*).$$

{We prove the exactness in $[E,A]_+$.} In order to show that $\im(\pi^*)\subset\ker(\iota^*)$, take $[s\colon X\to A]_+\in[X,A]_+$ and remark that $\pi\circ\iota$ is constant. Therefore
$$\iota^*(\pi^*([s]_+))=[s\circ\pi\circ\iota]_+=[s\circ e_A]_+=[e_A]_+,$$
namely $\pi^*[s]\in\ker(\iota^*)$.
In order to show that $\ker(\iota^*)\subset\im(\pi^*)$, take $[\psi\colon E\to A]_+\in\ker(\iota^*)$. Then the algebraic information of $\psi$ is plus-equivalent to the trivial homomorphism, since
$$[e_A]_+=\iota^*([\psi]_+)=[\iota\circ\psi]_+=[a(\psi)]_+.$$
By Lemma \ref{decomposition},
there exists $\psi_P\colon E\to A$ of type P, and $\psi_U\colon E\to A$ of type U such that $\psi=\psi_P\cdot\psi_U$.
As $\psi_P$ is of type $P$, it factors through the base space $X$
$$\psi_P\colon \xymatrix{E\ar[r]^-{\pi}&X\ar[r]^-{s_P}&A.}$$
Moreover, $\psi_U\simeq_+e_A$ and
$$[\psi]_+=[\psi_P\cdot\psi_U]_+=[\psi_P]_+\cdot[\psi_U]_+=[\psi_P]_+\cdot e_A=[\psi_P]_+=[s_P\circ\pi]_+=\pi^*([s_P]_+)\in\im(\pi^*).$$

{We prove the exactness in $[G,A]_+$.} In order to show that $\im(\iota^*)\subset\ker(b^*)$, take $[\psi\colon E\to A]_+$. By Theorem \ref{classificationnice}, there exists a classifying map $b\colon \tilde\Omega X\to G$ and an $a$-equivariant map $B\colon \tilde PX\to E$. In particular there is a commutative diagram
$$\xymatrix@R=.5cm{\tilde\Omega X\ar[r]^-{\tilde\iota}\ar[d]_-b&\tilde PX\ar[d]^-B\\
G\ar[r]_-\iota&E.}$$
It follows that
$\psi\circ\iota\circ b$ factors through the universal bundle $\tilde PX$ for $\tilde\Omega X$,
$$\psi\circ\iota\circ b\colon \xymatrix@R=.08cm{\tilde\Omega X\ar@{=}[d]\ar[r]^-b&G\ar[r]^-{\iota}&E\ar@{=}[d]\ar[r]^-{\psi}&A\ar@{=}[d]\\
\tilde\Omega X\ar[r]_{\tilde\iota}&\tilde PX\ar[r]_-B&E\ar[r]^-{\psi}&A.}$$
Then $\psi\circ\iota\circ b$ is of type U and
$$b^*(\iota^*([\psi]_+))=[\psi\circ\iota\circ b]_+=[e_A]_+.$$
Therefore $\iota^*([\psi]_+)\in\ker(b^*)$.
In order to show that $\ker(b^*)\subset\im(\iota^*)$, take $[a\colon G\to A]_+\in\ker(b^*)$. Then
$$[e_A]_+=b^*([a]_+)=[a\circ b]_+,$$
and, by Proposition \ref{degeneratecase}(2), $a\circ b\equiv_* e_A$.
There are then pointed equivalences of $A$-bundles over $X$
$$a_* E\cong_*a_* b_*\tilde PX\cong_*(a\circ b)_*\tilde PX\cong_*{e_A}_*\tilde PX=X\times A$$
that we use to construct the map $\psi\colon E\to a_*E\cong X\times A\to A$,
whose algebraic information can be computed to be $a\colon G\to A$. This means that
$$\im(\iota^*)\ni\iota^*([\psi]_+)=[\psi\circ\iota]_+=[a(\psi)]_+=[a]_+.\qedhere$$
\end{proof}

If we specialize the exact sequence to the case $A=M^kZ$ we obtain the following long exact sequence, which can be regarded as a dual of the homotopy long exact sequence of a fibration.

\begin{teorema}
\label{longexactsequence}
Let $X$ be a connected countable CW-complex (pointed in a vertex), $G$ a topological group whose homotopy groups are countable, $E$ a $G$-bundle over X, and $Z$ a countable abelian group.
There is a long exact sequence of abelian groups
$$\xymatrix{\dots\ar[r]&H^k(X;Z)\ar[r]&H^k_+(E;Z)\ar[r]&H^{k+1}(BG;Z)\ar[r]^-{\chi_{k+1}}&H^{k+1}(X;Z)\ar[r]&\dots}$$
where the map $\chi_k$ is induced by the classifying map of $E$.
\end{teorema}

\begin{proof} Let $t:X\to\tilde BG^c$ be a classifying map for $E^c$. We first notice that by Theorem \ref{pointedclassificationnice} there exists a map of topological groups $\tilde\Omega t\colon \tilde\Omega X\to G^{\text{c}}$ such that $E^{\textnormal{c}}\cong_* (\tilde\Omega t)_*\tilde PX$.
By applying Proposition \ref{characteristicexactsequence} to $E^{\textnormal{c}}$ with $k>0$ and $A:=M^kZ$ we obtain a sequence of abelian groups with five terms:
$$\xymatrix{
[\tilde BG^c,M^kZ]_+\ar[r]^-{-\circ t}&[X,M^kZ]_+\ar[r]&[E^{\text{c}},M^kZ]_+\ar[r]&[G^{\text{c}},M^kZ]_+\ar[r]^-{-\circ\tilde\Omega t}&[ \tilde\Omega X,M^kZ]_+.}$$
By Lemma \ref{connectingmap}, there is a commutative diagram
$$\xymatrix@C=2.cm@R=.5cm{\ar[d]_-\cong[G^c,M^kZ]_+\ar[r]^-{-\circ\tilde\Omega t}&\ar[d]_-\cong[\tilde\Omega X,M^kZ]_+\\
[\tilde BG^c,M^{k+1}Z]_+\ar[d]_-{\cong}\ar[r]_-{-\circ t}&[X,M^{k+1}Z]_+\ar[d]_-\cong.\\
H^{k+1}(\tilde BG^c;Z)\ar[r]_-{\chi_{k+1}}&H^{k+1}(X;Z).\\
}$$
Therefore the connecting map to the right in degree $k$,
$$-\circ\tilde\Omega t\colon [G^{\text{c}},M^kZ]_+\to[ \tilde\Omega X,M^kZ]_+,$$
can be identified with the connecting map map to the left in degree $k+1$,
$$-\circ t\colon [\tilde BG,M^{k+1}Z]_+\to[ X,M^{k+1}Z]_+,$$
which can further be identified with the map induced in cohomology by $t$ in degree $k+1$,
$$\chi_{k+1}\colon H^{k+1}(\tilde BG;Z)\to H^{k+1}(X;Z).$$
$$\xymatrix@R=.5cm{
[\tilde BG,M^kZ]_+\ar[r]\ar[d]_-{\cong}&\ar[d]_-{\cong}[X,M^kZ]_+\ar[r]&\ar[d]_-{\cong}[E^{\text{c}},M^kZ]_+\ar[r]&\ar[d]_-{\cong}[G^{\text{c}},M^kZ]_+\ar[r]&\ar[d]_-{\cong}[ \tilde\Omega X,M^kZ]_+\\
H^k(\tilde BG;Z)\ar[r]&H^k(X;Z)\ar[r]&H_+^k(E;Z)\ar[r]&H^{k+1}(\tilde BG;Z)\ar[r]&H^{k+1}(X;Z)
}$$
Starting with $k=0$ we can extend the sequence to the right infinitely many times, and obtain the desired long exact sequence.
\end{proof}

We specialize the sequence above to two extreme cases and use Corollary \ref{degeneratecasegroupscor} to simplify it.
\begin{ex}
Let $X$ be a connected countable CW-complex (pointed in a vertex), $G$ a topological group whose homotopy groups are countable, $E$ a $G$-bundle over X, and $Z$ a countable abelian group.
\begin{itemize}
	\item[>>] If $E=X\times G$ is the trivial bundle,
	the long exact sequence of Theorem \ref{longexactsequence} splits
$$\xymatrix@C=.4cm{\dots\ar[r]&H^k(X;Z)\ar[r]&H^k(X;G)\oplus H^{k+1}(\tilde BG;Z)\ar[r]&H^{k+1}(\tilde BG;Z)\ar[r]^-{0}&H^{k+1}(X;Z)\ar[r]&\dots}.$$
	\item[>>] If $E=EG$ is a universal bundle,
 the long exact sequence of Theorem \ref{longexactsequence} degenerates to
		$$\xymatrix@C=.8cm{\dots\ar[r]&H^k(X;Z)\ar[r]&0\ar[r]&H^{k+1}(\tilde BG;Z)\ar[r]^-{\cong}&H^{k+1}(X;Z)\ar[r]&\dots}.$$
\end{itemize}
\end{ex}

\subsection{Applications}
We collect here some interesting consequences of Theorem \ref{longexactsequence}, that allow us to compare the plus-cohomology groups with ordinary cohomology groups and with an instance of the twisted cohomology from \cite{nss}. The detailed proofs can be found in the author's thesis \cite{rovellitesi}.

\vphantom{}

For any $G$-bundle $E$, the fiber inclusion $\iota\colon G\hookrightarrow E$ is a Hurewicz cofibration. By \cite[Theorem 3.2.1]{piccinini}, if we let the ordinary cohomology groups act on the cofiber sequence of $\iota\colon G\hookrightarrow E$,\linebreak$\xymatrix@C=.5CM{G\ar[r]^-\iota&E\ar[r]&\textrm{cof}(\iota)\ar[r]&\Sigma  G}$,
we obtain a cohomology long exact sequence
$$\xymatrix@C=1cm{
\dots\ar[r]&H^k(\textrm{cof}(\iota);Z)\ar[r]_-{}&H^k(E;Z)\ar[r]^-{H^k(\iota;Z)}&H^{k}(G;Z)\ar[r]&H^{k+1}(\textrm{cof}(\iota);Z)\ar[r]&\dots
}.$$
This sequence can be compared to the sequence of Theorem \ref{longexactsequence}.

\begin{prop}[\cite{rovellitesi}]
Let $X$ be a connected countable CW-complex (pointed in a vertex), $G$ a topological group whose homotopy groups are countable, $E$ a $G$-bundle over X, and $Z$ a countable abelian group.
The comparison map of Remark \ref{trace} induces a morphism of long exact sequences of abelian groups
$$\xymatrix@R=.5cm{\dots\ar[r]&H^k(X;Z)\ar[r]\ar[d]_-{H^k(\textrm{cof}(\iota)\to X;Z)}&H^k_+(E;Z)\ar[r]\ar[d]&H^{k+1}(BG;Z)\ar[r]^-{\chi_{k+1}}\ar[d]&H^{k+1}(X;Z)\ar[r]\ar[d]^-{H^{k+1}(\textrm{cof}(\iota)\to X;Z)}&\dots\\
\dots\ar[r]&H^k(\textrm{cof}(\iota);Z)\ar[r]_-{}&H^k(E;Z)\ar[r]_-{H^k(\iota;Z)}&H^{k}(G;Z)\ar[r]&H^{k+1}(\textrm{cof}(\iota);Z)\ar[r]&\dots\\
}$$
\end{prop}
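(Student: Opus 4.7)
The plan is to describe each of the four vertical comparison maps, verify the first two squares by unwinding definitions, and reduce the third square to a based homotopy identification of the Puppe coboundary with the classifying map.

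First I would identify the verticals. Since $\pi\colon E\to X$ collapses the distinguished fibre $G$ to $x_0$, it factors uniquely as $E\xrightarrow{p} C(\iota)\xrightarrow{q} X$ through the cofibre quotient $p$. The verticals over $H^k(X;Z)$ and $H^{k+1}(X;Z)$ are then $H^\ast(q;Z)$. The vertical over $H^k_+(E;Z)$ is the trace map $[\psi]_+\mapsto[\psi]_\star$ of Corollary \ref{tracecor}. The vertical over $H^{k+1}_{\mathcal Gp}(G;Z)$ is the composite of the shift isomorphism $H^{k+1}(BG;Z)\cong[G^{\text{cof}},M^kZ]_+$ coming from Lemma \ref{connectingmap} with the trace $[G^{\text{cof}},M^kZ]_+\to[G^{\text{cof}},M^kZ]_\star=H^k(G^{\text{cof}};Z)$.

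For the left and middle squares, commutativity is immediate from the definitions. In the left square, both composites send $[s]_\star\in H^k(X;Z)$ to $[s\circ\pi]_\star=[s\circ q\circ p]_\star\in H^k(E^{\text{cof}};Z)$, using the factorisation of $\pi$. In the middle square, after applying Lemma \ref{connectingmap}, the top horizontal becomes precomposition with $\iota\colon G^{\text{cof}}\hookrightarrow E^{\text{cof}}$ on plus-classes, while the bottom horizontal $H^k(\iota;Z)$ is precomposition with $\iota$ on based homotopy classes; passing through the trace in either order gives the common value $[\psi\circ\iota]_\star$.

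The main obstacle is the right square, which requires that $H^{k+1}(q;Z)\circ\chi_{k+1}$ agree, under the shift identification, with the cofibre Puppe coboundary $\delta$. I would reduce this to producing a based homotopy commutative square
\[
\xymatrix@R=.45cm{
C(\iota) \ar[r]^-{\partial} \ar[d]_{q} & \Sigma G^{\text{cof}} \ar[d]^-{\simeq_\star}\\
X \ar[r]_-{t} & BG,
}
\]
where $\partial$ is the Puppe coboundary of the cofibre sequence of $\iota$, the right vertical is the based homotopy equivalence arising from Remark \ref{(co)unitmap} and Corollary \ref{milgrampropcor}, and $t$ is a classifying map of $E$. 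Such a square exists because $E$ is, up to based homotopy, the homotopy fibre of $t$, so $q$ sits inside the standard comparison between the cofibre sequence of the fibre inclusion and the principal fibration sequence ending in $t$; naturality of the Puppe construction on both sides and of the counit $\Sigma\tilde\Omega BG\to BG$ furnish the required based homotopy. Applying $H^{k+1}(-;Z)$ and rewriting $H^{k+1}(\Sigma G^{\text{cof}};Z)\cong H^k(G^{\text{cof}};Z)$ yields the commutativity of the right square, and since the vertical maps are natural in $k$ the argument extends to all positive $k$.
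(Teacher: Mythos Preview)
Your proposal is correct and follows essentially the same route as the paper: you verify the left square via the factorisation $\pi=q\circ p$, the middle square by naturality of the trace on $\iota$, and you reduce the right square to the same based-homotopy commutative square
\[
\xymatrix@R=.45cm{
C(\iota) \ar[r] \ar[d]_{q} & \Sigma G^{\text{cof}} \ar[d]\\
X \ar[r]_-{t} & BG^{\text{cof}}
}
\]
that the paper records as diagram~(\ref{cofibersequence}). The only difference is that the paper simply asserts this square and defers its construction to \cite[Section~5.1]{rovellitesi}, whereas you sketch why it exists via the comparison between the cofibre sequence of $\iota$ and the fibration sequence of $t$; your sketch is reasonable but would benefit from spelling out the counit map $\Sigma G^{\text{cof}}\simeq\Sigma\tilde\Omega BG\to BG$ explicitly as the right vertical, since that is the point where the degree shift $H^{k+1}_{\mathcal Gp}(G;Z)\to H^k(G^{\text{cof}};Z)$ is matched with the suspension isomorphism.
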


It was pointed out by Jeffrey Carlson (personal communication, November 15, 2016) that there is a different way to produce a long exact sequence involving the characteristic map. Indeed, the cofiber sequence $\xymatrix@C=.5CM{X\ar[r]^-t&BG\ar[r]&\textrm{cof}(t)\ar[r]&\Sigma  X}$
induces a long exact sequence
$$
\xymatrix@C=.7cm{\dots\ar[r]&H^{k}(\textrm{cof}(t);Z)\ar[r]&H^k(BG;Z)\ar[r]^-{t^*}&H^k(X;Z)\ar[r]&H^{k+1}_+(\textrm{cof}(t);Z)\ar[r]&\dots}.
$$
The fact that the plus-cohomology group $H^k_+(E;Z)$ and the ordinary cohomology group\linebreak$H^{k+1}(\textrm{cof}(t);Z)$ of the cofiber $\textrm{cof}(t)$ of the classifying map $t:X\to BG$ both fit into long exact sequences suggests that one can hope for a comparison. In \cite{rovellitesi}, a comparison map
$$H^{k+1}(\textrm{cof}(t);Z)\to H^k_+(E;Z)$$
is constructed, and proven to be an isomorphism using Theorem \ref{longexactsequence} together with standard techniques of homological algebra. This is summarized by the following theorem.

\begin{teorema}[\cite{rovellitesi}]
Let $X$ be a connected countable CW-complex (pointed in a vertex), $G$ a topological group whose homotopy groups are countable, $E$ a $G$-bundle over X, and $Z$ a countable abelian group.
There is an isomorphism
$$H_+^k(E;Z)\cong H^{k+1}(\textrm{cof}(t);Z)$$
that induces an isomorphism of long exact sequences of abelian groups
$$\xymatrix@R=.5cm{\dots\ar[r]&H^k_+(X;Z)\ar[d]_-{\cong}\ar[r]^-{\pi^*}&H^k_+(E;Z)\ar[r]^-{\iota^*}\ar[d]_-\cong&H^{k}_+(G;Z)\ar[r]^-{t^*}\ar[d]_-\cong&H^{k+1}_+(X;Z)\ar[r]\ar[d]^-{\cong}&\dots\\
\dots\ar[r]&H^k(X;Z)\ar[r]_-{}&H^k(\textrm{cof}_\star(t);Z)\ar[r]_-{}&H^{k+1}(BG;Z)\ar[r]_-{t^*}&H^{k+1}(X;Z)\ar[r]&\dots\\
}$$
\end{teorema}

This isomorphism gives an interpretation of the plus-cohomology groups in purely homotopical terms, and does not involve any assumption of countability for the (CW-structure of the) structure group $G$. The geometric definition in terms of plus-cohomology groups, however, highlights better the role played by the bundle structure. For instance, the plus-cohomology groups of a bundle $E$ can be used to detect
the number of structures of a fixed type on a bundle, e.g. the number of reductions of the structure group to a specific one, as we now explain.

\vphantom{}

Given $X$ a connected countable CW-complex, $G$ a topological group whose homotopy groups are countable, $E$ a $G$-bundle over X, $Z$ a countable abelian group, and $c\in H^k(BG;Z)$ a universal characteristic class, one can be interested in counting the number of $\hat G(c)$-reductions of $G$.
For instance, when $c:=w_2\in H^2(BSO(n);\mathbb Z/2)$ is the second Stiefel-Whitney class, and $E$ is an $SO(n)$-bundle $E$ over $X$, the question translates to counting the number of spin structures on $E$ (up to a suitable equivalence relation). 
Combining Theorems \ref{groupreductioncor} and \ref{retractioncor}, we see that there exists a $\hat G(c)$-reduction for a $G$-bundle $E$ if and only if the fiber
$$H^{k-1}_+(E;Z)_c:=\textrm{fib}_c\left(\iota^*\colon H^{k-1}_+(E;Z)\to H^{k-1}_+(G;Z)\cong H^k(BG;Z)\right)$$
of $\iota^*$ over $c\in H^k(BG;Z)$ is not empty. This suggests that such fiber could be an indication of how many such reductions there exist. 
On the other hand, in the literature the $\hat G(c)$-reductions of a bundle $E$ are often enumerated by means of $\pi_0(\Gamma^h(Bj;t))$, i.e., the set of connected components of the space of homotopy liftings of the classifying map $t\colon X\to BG$ of $E$ along the map $Bj\colon B\hat G(c)\to BG$. This is an instance of twisted cohomology (as in \cite{nss}).
The link between twisted cohomology and plus-cohomology is made precise by the following theorem.

\begin{teorema}[\cite{rovellitesi}] 
\label{comparisontwistedcohomology}
Let $X$ be a connected countable CW-complex, $G$ a topological group whose homotopy groups are countable, $E$ a $G$-bundle over X, $Z$ a countable abelian group, and $c\in H^k(BG;Z)$ a universal characteristic class.
There is a transitive action of $H^{k-1}(BG;Z)$ on the twisted cohomology $\pi_0(\Gamma^h(Bj;t))$, such that
$$H^{k-1}_+(E;Z)_c\cong \pi_0(\Gamma^h(Bj;t))/_{H^{k-1}(BG;Z)}.$$
\end{teorema}
The isomorphism is built by making explicit use of the construction of the plus-cohomology groups. The action, together with the isomorphism, provide useful information on the cardinality of the twisted cohomology, namely on the number of structures of a specific kind on a bundle.

\vphantom{}

Very often the action is trivial, and the fiber $H^{k-1}_+(E;Z)_c$ therefore coincides with the twisted cohomology $ \pi_0(\Gamma^h(Bj;t))$. This happens, for instance, when the cohomology of $BG$ vanishes in certain degrees. Here is an application that exploits this idea.

\begin{ex}
Recall from Construction \ref{whiteheadtower} the Whitehead tower $(O(n)^{(k)})_{k\ge0}$ of the orthogonal group $O(n)$.
We first observe that, by the universal coefficient theorem for cohomology and the Hurewicz Theorem,
$$\begin{array}{rcl}
H^{k-1}(BO(n)^{(k)};\mathbb Z/2)&\cong&\textrm{Hom}(H_{k-1}(BO(n)^{(k)};\mathbb Z),\mathbb Z/2)\\
&\cong& \textrm{Hom}(\pi_{k-1}(BO(n)^{(k)}),\mathbb Z/2)\\
&\cong& \textrm{Hom}(0,\mathbb Z/2)\cong0.\\
\end{array}$$
The induced map $H^k(BO(n);\mathbb Z/2)\to H^k(BO(n)^{(k)};\mathbb Z/2)$ allows us to think of the $k$th universal Stiefel-Whitney class $w_k\in H^k(BO(n);\mathbb Z/2)$ as a characteristic class living in $H^k(BO(n)^{(k)};\mathbb Z/2)$. Now, given an $O(n)$ bundle $E$ over $X$, by Theorem \ref{comparisontwistedcohomology}, the (possibly empty) fiber $H^0_+(E;\mathbb Z/2)_{w_1}$ counts the $SO(n)$-reductions of $E$, namely the orientations of $E$. If this set is not empty, there exists a bundle $E^{(1)}$ such that $E\cong E^{(1)}\otimes_{SO(n)}O(n)$. By Theorem \ref{comparisontwistedcohomology}, the (possibly empty) fiber $H^1_+(E^{(1)};\mathbb Z/2)_{w_2}$ coincides with twisted cohomology, and counts the $Spin(n)$-reductions of $E^{(1)}$, namely the spin structures of $E^{(1)}$. If this set is non-empty, there exists a bundle $E^{(2)}$ such that $E^{(1)}\cong E^{(2)}\otimes_{Spin(n)}SO(n)$. Inductively, if there exists an $O(n)^{(k)}$-bundle $E^{(k)}$ such that $E^{(k-1)}\cong E^{(k)}\otimes_{G^{(k-1)}}G^{(k)}$, by Theorem \ref{comparisontwistedcohomology}, the (possibly empty) fiber $H^{k-1}_+(E^{(k-1)};\mathbb Z/2)_{w_k}$ counts the $G^{(k)}$-reductions of $E^{(k-1)}$.
\end{ex}

\section{Appendix: Proof of Proposition \ref{totalspaceCWcomplex}}
In this section we prove Proposition \ref{totalspaceCWcomplex}, which is needed for Propositions \ref{fibersequence1} and \ref{propertiestypeofmorphisms}. The proof uses certain properties of compactly generated weakly Hausdorff spaces. Recall that a space $X$ is \textbf{weakly Hausdorff} if for every compact Hausdorff space $K$ and continuous map $f\colon K\to X$ the image $f(K)$ is closed. A function $f\colon X\to Y$ between topological spaces is \textbf{$k$-continuous} if for any compact Hausdorff space $C$ the precomposition $f\circ j\colon C\to Y$ of $f$ with a continuous map $j\colon C\to X$ is continuous (cf. \cite[Proposition 1.11]{strickland}).
The space $X$ is \textbf{compactly generated} if every k-continuous function $f\colon X\to Y$ is continuous.
It was proven in \cite{strickland} that compactly generated weakly Hausdorff spaces and continuous maps form a category $CGWH$ of convenient topological spaces, i.e., the category $CGWH$ contains all CW-complexes, is bicomplete and is cartesian closed.
\begin{rem}
\label{limitsandcolimitsinCGWH}
The category $CGWH$ of compactly generated weakly Hausdorff spaces is reflective in the category $CG$ of compactly generated topological spaces, and the category $CG$ is coreflective in the category $\mathcal Top$ of all spaces. The left adjoint $h\colon CG\to CGWH$ (described in \cite[Proposition 2.22]{strickland}) and the right adjoint $k\colon \mathcal Top\to CG$ (defined in \cite[Definition 1.1]{strickland}) are called \textbf{$h$-fication} and \textbf{$k$-fication} respectively. These functors help construct (co)limits in $CGWH$.
\begin{enumerate}
\item Colimits in $CGWH$ are constructed by $h$-fying the colimits in $\mathcal Top$, as proven in \cite[Corollary 2.23]{strickland}. In formulas,
$$\textrm{colim}_i^{CGWH}C_i\cong h(\textrm{colim}_i^{\mathcal Top}C_i).$$
In particular, if a colimit in $\mathcal Top$ of a diagram of CGWH spaces is a CGWH space, then it coincides with the colimit in $CGWH$.
\item Products in $CGWH$ are constructed by k-fying the products in $\mathcal Top$, because the inclusion of $CGWH$ in $CG$ is a right adjoint and products in $CG$ are constructed by k-fying the products in $\mathcal Top$ \cite[Proposition 2.4]{strickland}. In formulas,
$$C\times_{CGWH}C'\cong k(C\times_{\mathcal Top}C').$$
In particular, if a product in $\mathcal Top$ of two CGWH spaces is a CGWH space, then it coincides with the product in $CGWH$.
\end{enumerate}
\end{rem}
We now prove two closure properties of CGWH spaces.
\begin{lem}
\label{CGWH}
\begin{enumerate}
\item Every open subset of a CW-complex is a CGWH space.
\item Every fiber bundle whose base space is a CW-complex and whose fiber is a CW-complex is a CGWH space.
\end{enumerate}
\end{lem}
\begin{proof}[Proof of Lemma \ref{CGWH}]
We prove (1). As CW-complexes are normal spaces \cite[Proposition A.3.]{hatcher}, they are in particular regular. Whence, by \cite[Lemma 2.1(a)]{munkres}, every open neighborhood of a point contains a closed neighborhood of the same point. In particular, for every $U$ open subset of a CW-complex, each point of $U$ has a closed neighborhood in $U$. By  \cite[Chapter 5, Problem 1]{may99} this is enough for $U$ to be a CGWH space.

We prove (2). Let $E$ be a bundle over the countable CW-complex $X$ and with fiber the countable CW-complex $F$. We prove that $E$ is Hausdorff. Let $e,e'$ be two distinct points in $E$. If $\pi(e)\neq\pi(e')$, then there exist $U$ and $U'$ disjoint neighborhoods of $\pi(e)$ and $\pi(e')$ in $X$. So $E|_U$ and $E|_{U'}$ are disjoint neighborhoods of $e$ and $e'$. If $\pi(e)=\pi(e')$, then there exist $W$ a trivialization open that contains $\pi(e)$. As $E|_W\cong W\times F$, which is a product of Hausdorff spaces, the points $e$ and $e'$ can be separated in the open $E|_W$ by two disjoint open neighborhoods.

We prove that $E$ is compactly generated. Let $f\colon E\to Y$ be a k-continuous function between topological spaces.
Let $e$ be a point of $E$, and $U$ a trivialization open containing $\pi(e)$. The space $E|_U\cong U\times F$ is (homeomorphic to) an open subset of $X\times F$, which is a product of countable CW-complexes, and therefore a CW-complex. By (1), $E|_U$ is a CGWH space, and $f|_{E|U}$ is k-continuous, so $f|_{E|U}$ is continuous. Since the open subsets of the form $E|_U$ form a cover of $E$, it follows that $f$ is continuous, and $E$ is a CGWH space.
\end{proof}

We can now prove Proposition \ref{totalspaceCWcomplex}.
\begin{proof}[Proof of Proposition \ref{totalspaceCWcomplex}]
If $\sigma$ is a closed cell of $X$ and $U$ is a trivializing open subset of $X$, the following spaces are CGWH: the spaces $X$, $F$ and $\sigma$, as they are CW-complexes, the space $E$, as proven in Lemma \ref{CGWH}(2), the spaces $X\times F$ and $\sigma\times F$, as they are products of countable CW-complexes, and therefore CW-complexes by \cite[Theorem A.6]{hatcher},
the spaces $U$, $U\cap\sigma$, $U\cap\sigma\times F$, and $U\times F$, as they are open subsets of CW-complexes, and therefore CGWH spaces by Lemma \ref{CGWH}(1), and the space $E|_U$, as it is homeomorphic to $U\times F$, and therefore a CW-complex.

By Remark \ref{limitsandcolimitsinCGWH}(1), colimits of diagrams involving these spaces coincide, be they taken in the category $\mathcal Top$ of all spaces or in the category $CGWH$ of CGWH spaces.

We prove that the topological space $E$ is a colimit of its restrictions $E|_{\sigma}$ to closed cells $\sigma$ of $X$. If $\sigma$ varies over the closed cells of $X$ (and inclusions of those) and $U$ varies over the trivialization open subsets of $X$ for $E$ (and inclusion of those), we have the following homeomorphisms
$$\begin{array}{cclr}
E&\cong&\textrm{colim}_U[E|_U]&\text{$\{E|_U\}$ is an open cover of $E$}\\
&\cong&\textrm{colim}_U[U\times F]&\text{$E$ is locally trivial over $U$}\\
&\cong&\textrm{colim}_U[(\textrm{colim}_{\sigma}[U\cap\sigma])\times F]&\text{$X$ has the weak topology and $U$ is an open subset}\\
&\cong&\textrm{colim}_U[\textrm{colim}_{\sigma}[(U\cap\sigma)\times F]]&\text{$-\times F$ is a left adjoint (in $CGWH$)}\\
&\cong&\textrm{colim}_{\sigma}[\textrm{colim}_U[(U\cap\sigma)\times F]]&\text{Fubini Theorem \cite[IX.8]{maclane}}\\
&\cong&\textrm{colim}_{\sigma}[(\textrm{colim}_{U}[U\cap\sigma])\times F]&\text{$-\times F$ is left adjoint (in $CGWH$)}\\
&\cong&\textrm{colim}_{\sigma}[\sigma\times F]&\text{$\{U\cap\sigma\}$ is an open cover of $\sigma$.}\\
\end{array}$$
Now, every piece $\sigma\times F$ is a countable CW-complex, because it is the product of two countable CW-complexes. We therefore conclude that $E$ is a countable CW-complex.
\end{proof}

\bibliography{references}

\newcommand{\etalchar}[1]{$^{#1}$}
\begin{thebibliography}{Rov17b}

\bibitem[CKV03]{ckv}
M.~Cencelj, N.M. Kosta, and A.~Vavpeti{\v{c}}.
\newblock \textnormal{$ G $}-complexes with a compatible \textnormal{CW}
  structure.
\newblock {\em Journal of Mathematics of Kyoto University}, 43(3):585--597,
  2003.

\bibitem[Coh]{cohen}
R.~Cohen.
\newblock The topology of fiber bundles, online notes:
  \url{http://math.stanford.edu/~ralph/fiber.pdf}.

\bibitem[Dol63]{dold}
A.~Dold.
\newblock Partitions of unity in the theory of fibrations.
\newblock {\em Ann. of Math. (2)}, 78:223--255, 1963.

\bibitem[EE67]{ee}
C.J. Earle and J.~Eells.
\newblock The diffeomorphism group of a compact riemann surface.
\newblock {\em Bulletin of the American Mathematical Society}, 73(4):557--559,
  1967.

\bibitem[Far96]{farjoun}
E.D. Farjoun.
\newblock {\em Cellular spaces, null spaces and homotopy localization}, volume
  1622 of {\em Lecture Notes in Mathematics}.
\newblock Springer-Verlag, Berlin, 1996.

\bibitem[FH12]{fh}
E.D. Farjoun and K.~Hess.
\newblock Normal and conormal maps in homotopy theory.
\newblock {\em Homology Homotopy Appl.}, 14(1):79--112, 2012.

\bibitem[FP90]{pf}
R.~Fritsch and R.A. Piccinini.
\newblock {\em Cellular structures in topology}, volume~19 of {\em Cambridge
  Studies in Advanced Mathematics}.
\newblock Cambridge University Press, Cambridge, 1990.

\bibitem[G{\etalchar{+}}01]{gabai}
D.~Gabai et~al.
\newblock The smale conjecture for hyperbolic 3-manifolds.
\newblock {\em Journal of Differential Geometry}, 58(1):113--149, 2001.

\bibitem[Hat81]{hatchersmaleconjecture2}
A.~Hatcher.
\newblock On the diffeomorphism group of \textnormal{$S^1\times S^2$}.
\newblock {\em Proceedings of the American Mathematical Society}, pages
  427--430, 1981.

\bibitem[Hat83]{hatchersmaleconjecture}
A.~Hatcher.
\newblock A proof of the smale conjecture.
\newblock {\em Annals of Mathematics}, pages 553--607, 1983.

\bibitem[Hat02]{hatcher}
A.~Hatcher.
\newblock {\em Algebraic topology}.
\newblock Cambridge University Press, Cambridge, 2002.

\bibitem[Ill83]{illman}
S.~Illman.
\newblock The equivariant triangulation theorem for actions of compact lie
  groups.
\newblock {\em Mathematische Annalen}, 262(4):487--501, 1983.

\bibitem[LW12]{lw}
A.T. Lundell and S.~Weingram.
\newblock {\em The topology of CW complexes}.
\newblock Springer Science \& Business Media, 2012.

\bibitem[M{\etalchar{+}}96]{mayequivariant}
J.P. May et~al.
\newblock Equivariant homotopy and cohomology theory, volume 91 of
  \textnormal{CBMS} regional conference series in mathematics.
\newblock In {\em Published for the Conference Board of the Mathematical
  Sciences, Washington, DC}, page~88, 1996.

\bibitem[May99]{may99}
J.P. May.
\newblock {\em A concise course in algebraic topology}.
\newblock Chicago Lectures in Mathematics. University of Chicago Press,
  Chicago, IL, 1999.

\bibitem[Mil56a]{milnor1}
J.~Milnor.
\newblock Construction of universal bundles. {I}.
\newblock {\em Annals of Mathematics. Second Series}, 63:272--284, 1956.

\bibitem[Mil56b]{milnor2}
J.~Milnor.
\newblock Construction of universal bundles. {II}.
\newblock {\em Annals of Mathematics. Second Series}, 63:430--436, 1956.

\bibitem[Mil59]{milnor3}
J.~Milnor.
\newblock On spaces having the homotopy type of a {${\rm CW}$}-complex.
\newblock {\em Transactions of the American Mathematical Society}, 90:272--280,
  1959.

\bibitem[Mil67]{milgram}
R.J. Milgram.
\newblock \textnormal{The bar construction and abelian $H$-spaces}.
\newblock {\em Illinois Journal of Mathematics}, 11(2):242--250, 1967.

\bibitem[Mit]{mitchell2011}
S.~Mitchell.
\newblock Notes on principal bundles and classifying spaces, online article:\\
  \url{ https://sites.math.washington.edu/~mitchell/Atopc/prin.pdf }.

\bibitem[ML98]{maclane}
S.~Mac~Lane.
\newblock {\em Categories for the working mathematician}, volume~5 of {\em
  Graduate Texts in Mathematics}.
\newblock Springer-Verlag, New York, second edition, 1998.

\bibitem[MS74]{ms}
J.W. Milnor and J.D. Stasheff.
\newblock {\em Characteristic classes}.
\newblock Princeton University Press, Princeton, N. J.; University of Tokyo
  Press, Tokyo, 1974.
\newblock Annals of Mathematics Studies, No. 76.

\bibitem[Mun75]{munkres}
J.R. Munkres.
\newblock {\em Topology: a first course}.
\newblock Prentice-Hall, Inc., Englewood Cliffs, N.J., 1975.

\bibitem[NSS15]{nss}
Thomas Nikolaus, Urs Schreiber, and Danny Stevenson.
\newblock Principal $\infty$-bundles: general theory.
\newblock {\em Journal of Homotopy and Related Structures}, 10(4):749--801,
  2015.

\bibitem[Pic92]{piccinini}
R.A. Piccinini.
\newblock {\em Lectures on homotopy theory}, volume 171 of {\em North-Holland
  Mathematics Studies}.
\newblock North-Holland Publishing Co., Amsterdam, 1992.

\bibitem[Rov17a]{rovelli}
M.~Rovelli.
\newblock A looping--delooping adjunction for topological spaces.
\newblock {\em Homology, Homotopy and Applications}, 19(1):37--57, 2017.

\bibitem[Rov17b]{rovellitesi}
M.~Rovelli.
\newblock Towards new invariants for principal bundles, \textnormal{Ph.D.}
  thesis:\\ \url{https://infoscience.epfl.ch/record/226464}.
\newblock 2017.

\bibitem[Seg74]{segal}
G.~Segal.
\newblock Categories and cohomology theories.
\newblock {\em Topology}, 13:293--312, 1974.

\bibitem[Sma59]{smale}
S.~Smale.
\newblock Diffeomorphisms of the 2-sphere.
\newblock {\em Proceedings of the American Mathematical Society},
  10(4):621--626, 1959.

\bibitem[SSS09]{sss}
H.~Sati, U.~Schreiber, and J.~Stasheff.
\newblock Fivebrane structures.
\newblock {\em Reviews in mathematical physics}, 21(10):1197--1240, 2009.

\bibitem[Ste62]{steenrodbook}
N.E. Steenrod.
\newblock {\em Cohomology operations}.
\newblock Lectures by N. E. Steenrod written and revised by D. B. A. Epstein.
  Annals of Mathematics Studies, No. 50. Princeton University Press, Princeton,
  N.J., 1962.

\bibitem[Ste99]{steenrod}
N.~Steenrod.
\newblock {\em The topology of fibre bundles}.
\newblock Princeton Landmarks in Mathematics. Princeton University Press,
  Princeton, NJ, 1999.
\newblock Reprint of the 1957 edition, Princeton Paperbacks.

\bibitem[Str]{strickland}
N.~Strickland.
\newblock The category of \textnormal{CGWH} spaces, online notes:\\
  \url{http://neil-strickland.staff.shef.ac.uk/courses/homotopy/cgwh.pdf}.

\bibitem[Whi52]{whiteheadtower}
G.W. Whitehead.
\newblock Fiber spaces and the {E}ilenberg homology groups.
\newblock {\em Proc. Nat. Acad. Sci. U. S. A.}, 38:426--430, 1952.

\end{thebibliography}
\bibliographystyle{alpha}
\end{document}